\documentclass{commat}




\newcommand\C{\mathbb{C}}    
\newcommand\N{\mathbb{N}}    
\newcommand\Z{\mathbb{Z}}    
\newcommand\F{\mathbb{F}}    
\newcommand\R{\mathbb{R}}    
\newcommand\Q{\mathbb{Q}}    
\newcommand\algI{1}    
\newcommand\lket{\left|}    
\newcommand\rket{\right>}    
\newcommand\lbrak{\left[}    
\newcommand\rbrak{\right]}    
\newcommand\crea{{a^+}}    
\newcommand\pcrea{{(a^+)}}    
\newcommand\anni{a}    
\newcommand\Num{N}    
\newcommand\AW{AW(3)}    
\newcommand\AWq{AW_q(3)}    
\newcommand\HeisenAlg{\mathcal{H}}    
\newcommand\Heisen{\HeisenAlg(q)}    
\newcommand\gradsub{\mathfrak{H}}    
\newcommand\freq{\omega}    
\newcommand\algBound{\mathcal{B}(\ell_2)}    
\newcommand\genzero{K_0}    
\newcommand\genone{K_1}    
\newcommand\gentwo{K_2}    
\newcommand\oscLie{\lbrak\anni,\crea\rbrak}    
\newcommand\Proto{\mathcal{P}}    
\newcommand\Protoq{\mathcal{P}_q}    
\newcommand\dcmH{\eta}    
\newcommand\dcmTH{\theta}    
\newcommand\ALLdcm{\Phi}    
\newcommand\TheHom{\Psi}    
\newcommand\repHeisen{\HeisenAlg_0}    
\newcommand\repHeisenLie{\mathfrak{L}(\HeisenAlg_0)}    
\newcommand\repLieAB{{\lbrack\anni,\crea\rbrack}}    
\newcommand\nonLiesub{\Gamma}    
\newcommand\ad{\mbox{ad\ }}    

\newcommand\LieAB{\lbrak A,B\rbrak}    

\newcommand\creaIso{B}    
\newcommand\anniIso{A}    
\newcommand\LieABiso{\lbrak\anniIso,\creaIso\rbrak}    
\newcommand\pcreaIso{B}    
\newcommand\LieProto{\mathcal{L}(\Proto)}    
\newcommand\LieAWq{\mathcal{L}(\AWq)}    

\newcommand\Zplus{\N\backslash\{0\}}    

\newcommand\setdiff{\backslash}    
\newcommand\Zdcm{\ALLdcm_\Z}    
\newcommand\nonZdcm{\ALLdcm\setdiff\Zdcm}    

\newcommand\HeisenIs{\F\left<A,B\right>/(AB-qBA-1)}    

\newtheorem{assumption}[definition]{Assumption}


\title{%
    Extended commutator algebra for the $q$-oscillator\\ and a related Askey-Wilson algebra
    }

\author{%
    Rafael Reno S. Cantuba
    }

\authorinfo[%
    R. R. S. Cantuba]{
    De La Salle University, Philippines}{%
    rafael\_cantuba@dlsu.edu.ph
    }

\abstract{%
    Let $q$ be a nonzero complex number that is not a root of unity. In the $q$-oscillator with commutation relation $\anni\crea-q\crea\anni=1$, it is known that the smallest commutator algebra of operators containing the creation and annihilation operators $\crea$ and $\anni$ is the linear span of $\crea$ and $\anni$, together with all operators of the form $\crea^l\repLieAB^k$, and $\repLieAB^k\anni^l$, where $l$ is a nonnegative integer and $k$ is a positive integer. That is, linear combinations of operators of the form $\anni^h$ or $(\crea)^h$ with $h\geq 2$ or $h=0$ are outside the commutator algebra generated by $\anni$ and $\crea$. This is a solution to the Lie polynomial characterization problem for the associative algebra generated by $\crea$ and $\anni$. In this work, we extend the Lie polynomial characterization into the associative algebra $\Proto=\Proto(q)$ generated by $\anni$, $\crea$, and the operator $e^{\omega N}$ for some nonzero real parameter $\omega$, where $N$ is the number operator, and we relate this to a $q$-oscillator representation of the Askey-Wilson algebra $AW(3)$.
    }

\keywords{%
    $q$-oscillator, deformed~commutation~relations, commutator~algebra, Lie algebra, commutator~of~operators, Lie~polynomial, creation operator, annihilation operator, number operator, Askey-Wilson algebra
    }

\msc{%
    47L30, 17B60, 17B65, 16S15, 81R50
    (see \url{https://mathscinet.ams.org/mathscinet/msc/msc2020.html})
    }

\VOLUME{32}
\YEAR{2024}
\NUMBER{2}
\firstpage{1}
\DOI{https://doi.org/10.46298/cm.10820}

\begin{document}

\section{Introduction}

Throughout, we consider operators on the Hilbert space $\ell_2$ over the complex field $\C$, which consists of all sequences $\left(z_n\right)_{n=0}^\infty$ of elements of $\C$ with the property $ \sum_{n=0}^\infty \left|z_n\right|^2 <\infty $.\linebreak We use the complete orthonormal basis of $\ell_2$ consisting of the ket vectors $\lket n\rket $ for all\linebreak $n\in\N:=\{0,1,\ldots\}$. Denote by $\algBound$ the Banach algebra of all bounded operators on $\ell_2$. In the traditional harmonic oscillator, the creation operator $\crea$ and the annihilation operator $\anni$ can be represented as elements of $\algBound$ acting on the basis ket vectors according to the equations
\begin{eqnarray}
\anni \lket 0\rket &=&0,\nonumber\\
\anni \lket n\rket &=&\sqrt{n}\lket n-1\rket,\quad\quad\quad\ \ (n\in\N\backslash\{0\}),\nonumber\\
\crea \lket n\rket &=&\sqrt{n+1}\lket n+1\rket,\quad\quad (n\in\N).\nonumber
\end{eqnarray}
As a~consequence, $\crea$ and $\anni$ satisfy the canonical commutation relation
\begin{eqnarray}
\lbrak\anni,\crea\rbrak = 1,\label{classicComm}
\end{eqnarray}
where the operation $\lbrack -,- \rbrack : \algBound\times\algBound\rightarrow\algBound$ is the \emph{commutator} given by the rule $\lbrack X,Y\rbrack:=XY-YX$ for all $X,Y\in\algBound$. The algebra $\algBound$ is hence an abstract Lie algebra with respect to $\lbrack -,-\rbrack$ and the vector space operations.

\begin{problem}\label{classicProb} What is the smallest space of operators containing $\crea$ and $\anni$ that is closed under the commutator and vector space operations? In other words, viewing $\algBound$ as a~Lie algebra under said operations, what is the Lie subalgebra of $\algBound$ generated by $\crea$ and $\anni$? Equivalently, what is the \emph{commutator algebra} for the creation and annihilation operators of the classical harmonic oscillator?
\end{problem}

The solution to Problem~\ref{classicProb} is readily available from the theory of the classification of low-dimensional Lie algebras. As a~consequence of the relation~\eqref{classicComm}, the Lie subalgebra of $\algBound$ generated by $\crea$ and $\anni$ is isomorphic to the \emph{Heisenberg algebra}, which is the three-dimensional Lie algebra whose derived (Lie) algebra is contained in the center. The operators $\crea$, $\anni$ and $1\in\algBound$ form a~basis~\cite[Section 3.2.1]{Erd06}.

\subsection{The commutator algebra for \texorpdfstring{$\crea$, $\anni$}{a+, a} in the \texorpdfstring{$q$}{q}-oscillator}

Several modifications or generalizations to the relation~\eqref{classicComm} have been proposed. One is the replacement of the undeformed commutator by the $q$-deformed commutator\linebreak $\lbrack\anni,\crea\rbrack_q:=\anni\crea-q\crea\anni$ for some scalar parameter $q$. The result is the $q$-deformed harmonic oscillator or $q$-oscillator introduced in the classical works~\cite{Bie89,Mac89}. Another is the $\omega$-commutator $\lbrack\anni,\crea\rbrack_\omega:=e^\omega\anni\crea-e^{-\omega}\crea\anni$ for some nonzero real parameter $\omega$, which shall arise in later sections of this work. The latter deformed commutator also leads to a~$q$-oscillator representation~\cite{Zhe91}. See, for instance,~\cite{Bat15,Reg01} for physical interpretations or realizations of the $q$-oscillator. The significance of the $q$-deformed oscillator is that, together with the deformed quantum group $SU_q(2)$, these two mathematical constructions were once considered as possible candidates for the role of oscillator and angular momentum, respectively, at very small scales, such as the Planck scale~\cite[p. 1155]{Zhe91}. The natural continuation now is to extend Problem~\ref{classicProb} for the $q$-oscillator.

\begin{problem}\label{qProb} Given a~scalar $q\neq 1$, what is the commutator algebra for the creation and annihilation operators of the $q$-oscillator?
\end{problem}

A solution to Problem~\ref{qProb} has recently been obtained in~\cite{Can19a,Can19b}. Some generalizations and alternative approaches were studied in~\cite{Can20b,Can22,Can21,Can20a}. Before we describe the solution, we first discuss some consequences of, and some algebraic structures related to, the $q$-deformed commutation relation
\begin{eqnarray}\label{qComm}
\anni\crea-q\crea\anni = 1.
\end{eqnarray}
The \emph{$q$-deformed Heisenberg algebra} is the unital associative algebra $\Heisen$ (with unit $1$) abstractly defined by a~presentation with generators $A,B$ and relation $AB-qBA=1$. More succinctly, if the underlying field is $\F$, then $\Heisen:=\HeisenIs$. 

The use of the symbol $\Heisen$ and of the term ``$q$-deformed Heisenberg algebra'' to both mean $\HeisenIs$ is based on~\cite{Hel00}. An excellent discussion may be found in~\cite[pp. 5--11]{Hel00} about which among the 410 references the authors cite, published not later than the year 2000, motivated the algebra $\HeisenIs$, and how it has been consistently denoted by $\Heisen$ or referred to as the $q$-deformed Heisenberg algebra. 

Some works published after~\cite{Hel00} continue to refer to $\HeisenIs$ as $\Heisen$ or as the $q$-deformed Heisenberg algebra. Said works come from varied fields of mathematics, such as Ring Theory~\cite{Hel02,Hel05}, Lie algebras~\cite{Can19a,Can22,Can21,Can20a,Can19b}, Mathematical Physics~\cite{Can20b,Lar03}, and algebraic curves~\cite{Dej09}. All these, and most probably many more, refer to $\Heisen=\HeisenIs$ as the $q$-deformed Heisenberg algebra. No confusion should arise with similar symbol and terminology from recent studies like~\cite{Lop17,Lop22a,Lop22b,Lu15a,Lu15b}, which are \emph{not} the subject of this paper.

From this point onward, we assume that $\F=\C$. If $\repHeisen$ is the unital associative subalgebra of $\algBound$ generated by $\crea$ and $\anni$, we immediately find that there exists a~surjective homomorphism $\TheHom:\Heisen\rightarrow\repHeisen$ of algebras such that $A\mapsto \anni$ and $B\mapsto\crea$. In the common realizations of $\crea$ and $\anni$ as operators on $\ell_2$, such as those in~\cite{Ari76,Chu96,Sch94}, it can be shown by routine arguments that the operator $\lbrack\anni,\crea\rbrack^n$ is not the zero operator for any $n\in\N$. This condition, together with an assumption that $q$ is nonzero and is not a~root of unity, is enough to conclude, using~\cite[Theorem 6.7]{Hel05}, that the homomorphism $\TheHom$ is injective. i.e., The representation $\TheHom$ of $\Heisen$ is faithful. Thus, we have:

\begin{proposition}\label{embeddingProp} The subalgebra $\repHeisen$ of $\algBound$ is isomorphic to $\Heisen$.
\end{proposition}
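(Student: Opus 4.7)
The goal is to establish that the surjective homomorphism $\TheHom$ obtained above is also injective. I would approach this by exhibiting a spanning set of $\Heisen$ and showing its image in $\repHeisen$ is linearly independent. Using the defining relation $AB - qBA = 1$ of $\Heisen$ to rewrite $AB$ as $qBA + 1$, any word in $A,B$ reduces to a linear combination of the normal-form monomials $\{B^m A^n : m,n\geq 0\}$, so these span $\Heisen$; a standard PBW/diamond-lemma argument (the rewriting system is confluent) in fact shows they form a basis. Injectivity of $\TheHom$ thus reduces to linear independence in $\algBound$ of the operators $\{\crea^m\anni^n : m,n\geq 0\}$.

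To verify this, I would exploit the explicit action of $\anni,\crea$ on the orthonormal basis $\{\lket n\rket\}$ of $\ell_2$ familiar from \cite{Ari76,Chu96,Sch94}: $\anni$ lowers the index by one and $\crea$ raises it by one, each with explicit $q$-factorial coefficients. Hence $\crea^m\anni^n$ sends $\lket k\rket$ to a scalar multiple of $\lket k-n+m\rket$, with nonzero scalar for $k\geq n$ since the relevant $q$-factorial coefficients do not vanish when $q$ is not a root of unity. A finite vanishing combination $\sum c_{m,n}\crea^m\anni^n$ applied to $\lket k\rket$ for varying $k$ then forces every $c_{m,n}=0$, by isolating the distinct basis vectors $\lket k-n+m\rket$ on the right-hand side.

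Alternatively — and more in keeping with the route signaled in the text — one can invoke \cite[Theorem 6.7]{Hel05} directly, whose only nontrivial hypothesis is that $[\anni,\crea]^n$ be a nonzero operator for every $n\in\N$. This reduces to a short diagonal calculation: $[\anni,\crea]=1+(q-1)\crea\anni$ acts diagonally on $\{\lket n\rket\}$ with eigenvalues $q^n$, which are nonzero when $q$ is not a root of unity, so its powers are diagonal with eigenvalues $q^{nk}$ and in particular nonzero. The main obstacle in either approach is this nontriviality check; once it is in hand, either the direct PBW argument or the cited theorem yields injectivity, and the isomorphism $\repHeisen\cong\Heisen$ follows.
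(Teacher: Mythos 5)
Your second route is precisely the paper's argument: the paper obtains the homomorphism from the presentation and then invokes \cite[Theorem 6.7]{Hel05}, asserting as a ``routine argument'' that $\lbrack\anni,\crea\rbrack^n$ is never the zero operator; your diagonal computation $\lbrack\anni,\crea\rbrack\lket n\rket = q^n\lket n\rket$ is exactly the verification the paper omits (and it in fact shows this hypothesis holds for any $q\neq 0$ -- the non-root-of-unity assumption is what the cited theorem itself requires). Your first route is genuinely different and more self-contained: it replaces the black-box representation theorem of \cite{Hel05} by a direct normal-form/linear-independence argument, which buys independence from that reference at the cost of one step you currently gloss over. When you apply $\sum c_{m,n}\crea^m\anni^n$ to $\lket k\rket$, all terms sharing the same difference $m-n$ land on the \emph{same} ket $\lket k+m-n\rket$, so ``isolating the distinct basis vectors'' only separates the classes with distinct $m-n$; within one such class you obtain, for each $k$, a single scalar relation whose coefficients are products of $q$-integers, and you must still argue -- for instance by observing that these coefficients are polynomials of pairwise distinct degrees in the variable $q^k$, and that the powers $q^k$ take infinitely many values since $q$ is not a root of unity -- that the relations for varying $k$ force every $c_{m,n}$ in the class to vanish. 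With that Vandermonde-type step supplied, either route is a correct proof of the proposition.
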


By this isomorphism, the algebraic properties of $\Heisen$ such as bases, gradations, and others, carry over to $\repHeisen$ with $A\mapsto \anni$ and $B\mapsto\crea$. We discuss these properties in the succeeding remarks.

\begin{remark}\label{PBWrem} By~\cite[Corollary~4.5]{Hel05}, the elements
\begin{eqnarray}
\repLieAB^k, \label{typeLie}\\
\repLieAB^k \anni^l,\label{typeA0}\\
\pcrea^{l}\repLieAB^k, \label{typeB0}
\end{eqnarray}
where $k\in\N$, $l\in\Zplus$, form a~basis for $\repHeisen$. 
\end{remark}

Denote by $\repHeisenLie$ the Lie subalgebra of $\repHeisen$ generated by $\crea$ and $\anni$, or equivalently, $\repHeisenLie$ is the commutator algebra for the creation and annihilation operators of the \mbox{$q$-oscillator}. The basis of $\repHeisen$ consisting of the elements~\eqref{typeLie}--\eqref{typeB0} has a~very important role in the solution of Problem~\ref{qProb} as obtained in~\cite{Can19a}. To describe this solution, we divide~\eqref{typeLie}--\eqref{typeB0} into two groups. For the first group, we take $1=\repLieAB^0$ from~\eqref{typeLie}, and from~\eqref{typeA0},\eqref{typeB0}, we consider those in which the exponent $k$ of $\repLieAB$ is zero and at the same time, the exponent $l$ of either $\crea$ or $\anni$ is at least $2$. That is,
\begin{eqnarray}
1,\quad \anni^l,\quad\pcrea^l,\quad\quad\quad (l\in\N\backslash\{0,1\}).\label{nonLiebasis}
\end{eqnarray}
Then the basis elements of $\repHeisen$ in~\eqref{typeLie}--\eqref{typeB0} that are not in~\eqref{nonLiebasis} are
\begin{eqnarray}
\repLieAB^k, & & \quad\quad\quad\quad (k\in\Zplus),\label{ctypeLie}\\
\anni,\quad\repLieAB^k \anni^l, & & \quad\quad\quad\quad (k,l\in\Zplus),\label{ctypeA0}\\
\crea,\quad\pcrea^{l}\repLieAB^k, & & \quad\quad\quad\quad (k,l\in\Zplus).\label{ctypeB0}
\end{eqnarray}

\begin{remark}\label{LieBasisRem} According to~\cite[Theorem~5.8]{Can19a}, the elements~\eqref{ctypeLie}--\eqref{ctypeB0} form a~basis for $\repHeisenLie$. That is, if $\nonLiesub$ is the vector subspace of $\repHeisen$ spanned by~\eqref{nonLiebasis}, then we have the direct sum decomposition
\begin{eqnarray}
\repHeisen = \nonLiesub\oplus\repHeisenLie.
\end{eqnarray}
\end{remark}

In other words, any finite linear combination of~\eqref{nonLiebasis} cannot be expressed as a~finite linear combination of nested commutators in $\crea$, $\anni$, or equivalently not a~\emph{Lie polynomial in $\crea$, $\anni$}. Thus, the answer to Problem~\ref{qProb} is that the commutator algebra $\repHeisenLie$ is the set of all finite linear combinations of~\eqref{ctypeLie}--\eqref{ctypeB0}. The generators $\crea$, $\anni$ of $\repHeisenLie$ are clearly Lie polynomials in $\crea$, $\anni$, but then it is most natural to ask how the other basis elements of $\repHeisenLie$ from~\eqref{ctypeLie}--\eqref{ctypeB0} can be expressed as Lie polynomials in $\crea$, $\anni$. To show this, we shall exhibit some relations from~\cite{Can19a,Can19b} that express such basis elements as Lie polynomials or linear combinations of nested commutators in $\crea$,~$\anni$, but before exhibiting these relations, we first discuss a~convenient notation using the \emph{adjoint map}.

Given a~Lie algebra $L$ and $x \in L$, recall the adjoint map $\ad x : L \to L$ defined by $ y\mapsto \lbrack x,y\rbrack$. By linearity of the Lie bracket in the second argument, $\ad x$ is a~linear map for any $x \in L$, and by the skew-symmetry of the Lie bracket, $(-\ad x)(y) = -[x,y] = [y,x]$. For convenience in expressing nested commutators, we use the concept of the adjoint map in the following manner.
\begin{remark}\label{adRem} Let $x_1, x_2, \ldots,x_k, y_1, y_2, \dots, y_l \in L$. If $k=2$, the nested commutator $[x_1,[x_2,y]]$ can be expressed as the composition of two adjoint maps given by\linebreak $((\ad x_1) \circ (\ad x_2))(y)$. In general, we have
 \begin{eqnarray}
 ((\ad x_1)\circ(\ad x_2)\circ \cdots \circ (\ad x_k))(y) = [x_1,[x_2,[\cdots,[x_k,y]\cdots]]]. \label{eq:9}
 \end{eqnarray}
For the special case $x_1 = x_2 = \cdots = x_k$, we can write the left-hand side of~\eqref{eq:9} as $(\ad x)^k (y)$. Similarly, if $l=2$, the nested commutator of $[[x,y_1],y_2]$ can be expressed as the composition of two adjoint maps given by $((-\ad y_2)\circ(-\ad y_1))(x)$. This can be generalized as
  \begin{eqnarray}
  ((-\ad y_1) \circ (-\ad y_2)\circ \cdots \circ (-\ad y_l))(x) = [[[\cdots[x,y_l],\cdots], y_2], y_1]. \label{eq:10}
  \end{eqnarray}
We also have the similar result that if $y_1 = y_2 = \cdots = y_l$, then we can write the left-hand side of~\eqref{eq:10} as $(-\ad y)^l (x)$.
\end{remark}

\begin{remark}
From~\cite[Section~5]{Can19a}, if $q$ is nonzero and is not a~root of unity, then the basis elements of $\repHeisenLie$ from~\eqref{ctypeLie}--\eqref{ctypeB0} can be expressed as Lie polynomials in $\crea$ and $\anni$ through the relations
\begin{eqnarray}
\repLieAB^{k+2} & = & \frac{-q^k (1-q)}{1-q^{k+1}}\sum_{i=0}^k \frac{\left(\left(\ad \crea\right)\circ\left(-\ad \repLieAB\right)^{k}\circ(\ad\anni)\right)\left(\lbrack \anni,\crea\rbrack\right)}{(q-1)^{1+i}},\label{combase1}\\
\repLieAB^{k+1}\anni^l & = & -\frac{\left(\left(-\ad \repLieAB\right)^k \circ\left(-\ad \anni\right)^{l+1}\right)\left(\crea\right)}{(1-q)^l (q^l -1)^k}, \label{combase2}\\
\pcrea^l \repLieAB^{k+1} & = & \frac{\left(\left(\ad \crea\right)^{l-1}\circ\left(\ad \repLieAB\right)^{k+1}\right)\left(\crea\right)}{(q-1)^{k+1}(1-q^{k+1})^{l-1}},\label{combase4}
\end{eqnarray}
which hold for any $k\in\N$, and any $l\in\Zplus$. The operators $\crea$, $\anni$, and $\repLieAB$ are clearly Lie polynomials in $\crea$, $\anni$, and so we need not consider them in the relations~\eqref{combase1}--\eqref{combase4}. The commutator table for these basis elements of $\repHeisenLie$ can be gleaned from~\cite[Table~1]{Can19a}, and a~detailed discussion of the computational aspects of these relations can be found in~\cite[Section~2.1]{Can19b}.
\end{remark}

If $q$ is a~root of unity other than $1$, there are special cases for the constructions in~\mbox{\eqref{combase1}--\eqref{combase4}}, and the basis~\eqref{ctypeLie}--\eqref{ctypeB0} of $\repHeisenLie$ is reduced according to some conditions in the exponents. These can be found in~\cite[Section~4.1]{Can19b}. If $q=0$, an entirely different basis and nested commutator constructions can be found in~\cite[Section~4]{Can19a}. These two cases, however, are not covered by the representation theorem~\cite[Theorem 6.7]{Hel05}, and so the possibly nonfaithful representation $\repHeisen$ of $\Heisen$ for these two cases may lead to further restrictions in the algebraic structure of $\repHeisenLie$. We reserve these two cases as for a~possibly interesting continuation of the results presented in this paper, and so we have the following.

\begin{assumption}\label{nontorsionAs}
The scalar $q$ is nonzero and is not a~root of unity.
\end{assumption}

\subsection{The objective of this study: extending the commutator algebra for \texorpdfstring{$\crea$, $\anni$}{a+, a}}\label{objSec}

As a~natural continuation of Problem~\ref{qProb}, we consider extending the generating set of the operator Lie algebra by another important operator from the $q$-oscillator. One possibility is the \emph{number operator $\Num$} defined by the action $\Num\lket n\rket := n\lket n\rket$ for any $n\in\N$. The canonical commutation relations that relate $\Num$ to $\crea$ and $\anni$ are
\begin{eqnarray}
\lbrak\Num,\crea\rbrak=\crea,\quad\lbrak\anni,\Num\rbrak =\anni,\label{Ncommrel}
\end{eqnarray}
which hold even for the $q$-oscillator. We note the simple nature of the relations~\eqref{Ncommrel}, in which we can see that the result of performing the commutator operation on any of \mbox{$\crea$, $\anni$} involving $\Num$ simply results to linear combinations of $\crea$, $\anni$. By an easy argument, an immediate consequence is that the extended commutator algebra for $\crea$, $\anni$, $\Num$ is simply the direct sum $\C\Num\oplus\repHeisenLie$, where $\C\Num$ is the set of all complex scalar multiples of $\Num$. As will be demonstrated in the results in this paper, it is the operator $q^{\kappa\Num}$, defined by the action
\begin{eqnarray}
q^{\kappa\Num}\lket n\rket:=q^{\kappa n}\lket n\rket,\label{kparam}
\end{eqnarray}
for some scalar $\kappa$, that gives a~richer algebraic structure in extending the commutator algebra for $\crea$, $\anni$. By some choice of $\kappa$, our results for the extended commutator algebra for $\crea$, $\anni$, $q^{\kappa\Num}$ turn out to be intimately related to a~$q$-oscillator realization of an \emph{Askey-Wilson algebra}. The three-parameter Askey-Wilson algebra $\AW$ was introduced in~\cite{Zhe91}, and is said to be important by itself as a~dynamical symmetry algebra in problems in which the Askey-Wilson polynomials arise as eigenfunctions~\cite[p.~1147]{Zhe91}. The algebra $\AW$ and its extension to generalized families of similar algebras have been extensively studied in algebraic combinatorics, among many other fields. See, for instance, the comprehensive discussion in~\cite[Section~1]{Ter11} about how the Askey-Wilson algebra and its variants are studied in algebraic combinatorics, integrable systems, and quantum mechanics, or the discussion in~\cite[Section~1]{Hua15} about how $\AW$ is related to several significant quantum groups. The algebraic perspective in this work, however, differs from that in~\cite{Hua15} or~\cite{Ter11}. We do not study ring-theoretic generalizations or finite-dimensional representation theory, but the Lie structure of associative algebras, such as the perspective in the studies~\cite{Can15,Can19a,Can19b}. We show that our results about the extended commutator algebra for the $q$-oscillator shed light on some related Lie structure in a~$q$-oscillator representation of the algebra $\AW$.

\section{Preliminaries: Algebraic structure of the \texorpdfstring{$q$}{q}-deformed Heisenberg algebra \texorpdfstring{$\Heisen$}{H(q)}}\label{prelSec}

We review some important results about the algebra $\Heisen$, which has a~presentation with generators $A,B$ and relation $AB-qBA=1$. Let $C:=\lbrack A,B\rbrack$. In terms of $A,B,C$, the basis~\eqref{typeLie}-\eqref{typeB0} of $\repHeisen=\Heisen\subseteq\Proto$ can be rewritten as
\begin{eqnarray}
C^k,\quad C^k A^l, \quad C^k B^l,\quad\quad (k\in\N,\ l\in\Zplus).\label{ABCbasis}
\end{eqnarray}
For each $l\in\N$, define $\gradsub_{-l}$ as the span of all elements $C^k A^l$ ($k\in\N$), and $\gradsub_{l}$ as the span of $C^k B^l$ ($k\in\N$).

\begin{remark}\label{CBformRem} In the basis~\eqref{ABCbasis} of $\Heisen$, we use basis elements of the form $C^k B^l$ for the product of powers of $C$ and $B$, instead of $B^l C^k$ which was used in~\cite{Can19b,Hel05}. This may be justified by the reordering formula $B^l C^k =q^{-kl}C^k B^l$ (for any $k,l\in\N$) which is from \mbox{\cite[equation 18]{Hel05}}. This change in normal form for this particular type of basis element is valid because of the assumption that $q$ is nonzero (Assumption~\ref{nontorsionAs}).
\end{remark}

Denote the set of all integers by $\Z$. For any vector subspaces $\mathfrak{A}$ and $\mathfrak{B}$ of any algebra $\mathcal{A}$, we define $\mathfrak{A}\mathfrak{B}$ as the span of all elements $ab$ where $a\in\mathfrak{A}$ and $b\in\mathfrak{B}$. Following~\mbox{\cite[Proposition~2.4, Corollary~4.5]{Hel05}}, the collection $\{\gradsub_k\ :\ k\in\Z\}$ is a~$\Z$-gradation of $\Heisen$. That is, we have the direct sum decomposition
\begin{eqnarray}
\Heisen = \bigoplus_{k\in\Z}\gradsub_k,
\end{eqnarray}
and that for any $h,k\in\Z$,
\begin{eqnarray}
\gradsub_h\gradsub_k\subseteq\gradsub_{h+k}.
\end{eqnarray}
Thus, for instance, if $m,k\in\N$ and $n,l\in\Zplus$ with $n>l$, then $C^m B^n \cdot C^k A^l \in\gradsub_{n-l}$. However, this does not give us any information about how to express $C^m B^n \cdot C^k A^l$ as a~linear combination of the basis elements $C^h B^{n-l}$ ($h\in\N$) of $\gradsub_{n-l}$. In general, we are interested in the \emph{structure constants} of $\Heisen$ with respect to the basis~\eqref{ABCbasis}. We discuss these in the following subsections.

\subsection{Structure constants of \texorpdfstring{$\Heisen$}{H(q)} as an associative algebra}\label{assoStructSubsec}

In this subsection, we recall the reordering formula and structure constants of $\Heisen$ as an associative algebra, which is studied in~\cite{Can19a,Can19b,Hel00,Hel05}. We present here a~simplified discussion of~\cite[Section 2.1]{Can19b}. Define $\{0\}_q:=0$, and for each $n\in\Zplus$, we recursively define $\{n\}_q:=1+q\{n-1\}_q$. That is, $\{n\}_q=1+q+\cdots+q^{n-1}$. If $q\neq 1$, then $\{n\}_q=\frac{1-q^n}{1-q}$. The \emph{Gaussian binomial coefficients} or \emph{$q$-binomial coefficients} are recursively defined by
\begin{eqnarray}
{{n}\choose{0}}_q & = & 1,\\
{{0}\choose{k+1}}_q & = & 0,\\
{{n+1}\choose{k+1}}_q & = & {{n}\choose{k}}_q + q^{k+1}{{n}\choose{k+1}}_q,\label{qbirec}
\end{eqnarray}
for any $n,k\in\N$. These $q$-binomial coefficients satisy the symmetry property ${{n}\choose{k}}_q={{n}\choose{n-k}}_q$ for any $k\in\{0,1,\ldots,n\}$, and as a~consequence also the properties
\begin{eqnarray}
{{n}\choose{0}}_q = {{n}\choose{n}}_q & = & 1,\label{qbi1}\\
{{n}\choose{1}}_q = {{n}\choose{n-1}}_q & = & \{n\}_q.\label{qbi2}
\end{eqnarray}
For each $l\in\Zplus$ and each $i\in\{0,1,\ldots,l\}$ we define
\begin{eqnarray}
 c_i(l) := (q-1)^{-l}(-1)^{l-i}q^{{i+1}\choose{2}}{{l}\choose{i}}_q.\label{cidef}
\end{eqnarray}
Let $k,m\in\N$, and let $l,n\in\Zplus$. As derived in~\cite[Section 2.1]{Can19b}, the structure constants of the basis~\eqref{ABCbasis}, given $l,n\in\Zplus$, may be determined from the relations

\begin{eqnarray}
C^m A^n \cdot C^k A^l & = & q^{kn}C^{m+k}A^{n+l},\label{medA2}\\
C^m B^n \cdot C^k B^l & = & q^{-kn}C^{m+k}B^{n+l},\label{medB2}\\
C^m A^n \cdot C^k B^l & = & \sum_{i=0}^{\min\{l,n\}}q^{(i+k)n-il}c_i(l)\cdot C^{m+i+k}A^{n-l},\quad\qquad\ \ \ (n\geq l),\label{ABnbigger}\\
 & = & \sum_{i=0}^{\min\{l,n\}}q^{kn}c_i(n)\cdot C^{m+i+k}B^{l-n},\quad\quad\quad\qquad\ \ \ \ (l>n),\label{ABnsmaller}\\
C^m B^n \cdot C^k A^l & = & \sum_{i=0}^{\min\{l,n\}}q^{-(i+k)n}c_i(l)\cdot B^{n-l}C^{m+k+i},\quad\quad\qquad\ (n\geq l)\label{BAnbigger},\\
 & = & \sum_{i=0}^{\min\{l,n\}}q^{-(i+k)n}c_i(n)\cdot C^{i+m+k}A^{l-n},\quad\quad\quad\ \ \ \ (l>n).\label{BAnsmaller}
\end{eqnarray}
There is some simplification in the scalar coefficients regarding $c_i(l)$, since in~\cite{Can19b} some other scalar coefficients were defined but are actually dependent on $c_i(l)$. We summarize in Table~\ref{table:structconst} below how the relations~\eqref{medA2} to~\eqref{BAnsmaller} give the structure constants of $\Heisen$ with respect to the basis~\eqref{ABCbasis}.
\begin{center}
\begin{table}[h]
\centering
\begin{tabular}
{|c|c|c|c|}
\hline
$\cdot$ & $C^k A^l$ & $C^k B^l$ \\
\hline
$C^m A^n$ &~\eqref{medA2} &~\eqref{ABnbigger},~\eqref{ABnsmaller} \\
\hline
$C^m B^n$ &~\eqref{BAnbigger},~\eqref{BAnsmaller} &~\eqref{medB2} \\
\hline
\end{tabular}
\caption{\label{table:structconst}Relations that can be used to obtain the structure constants of the associative algebra $\Heisen$ with respect to the basis~\eqref{ABCbasis}}
\end{table}
\end{center}

\subsection{Structure constants of \texorpdfstring{$\Heisen$}{H(q)} as a~Lie algebra}

We extend the information from Section~\ref{assoStructSubsec} by determining the structure constants of $\Heisen$ as a~Lie algebra, still with respect to the basis~\eqref{ABCbasis}. Let $m,r\in\N$, and let $n,s\in\N\backslash\{0\}$. Using the computational techniques in the previous subsection, it is routine to show that the relations
\def\colIII{\LieAB^r}
\def\colIV{\LieAB^r A^s}
\def\colV{B^s \LieAB^r}

\def\rowIII{\LieAB^m}
\def\rowIV{\LieAB^m A^n}
\def\rowV{B^n \LieAB^m}

\begin{eqnarray}
\lbrack C^m A^n,C^r B^s \rbrack & = & \sum_{i=0}^{\min\{r,s\}} q^{rs}\left(1-q^{E(i)}\right)c_i(n)C^{m+i+r},\qquad\qquad\qquad\quad (n=s),\label{equalns}\\
& = & \sum_{i=0}^{\min\{r,s\}}q^{nr+(s-n)(n-i)}\left(1-q^{F(i)}\right)c_i(n)C^{m+i+r}B^{s-n},\quad (n<s),\label{nless}\\
& = & \sum_{i=0}^{\min\{r,s\}}q^{nr+i(n-s)}\left(1-q^{G(i)}\right)c_i(s)C^{m+i+r}A^{n-s},\qquad\ \ \ (n>s)\label{nbig},
\end{eqnarray}
where
\begin{eqnarray}
E(i) & := & n(m+r-i),\nonumber\\
F(i) & := & n(2m+r+n-s)-in-ms, \label{Fidef}\nonumber\\
G(i) & := & s(m+2r),\nonumber
\end{eqnarray}
hold in $\Heisen$ where $i\in\{1,2,\ldots,\min\{n,s\}\}$. Using~\eqref{medA2},~\eqref{medB2}, we also have
\begin{eqnarray}
\lbrack C^m A^n,C^r A^s \rbrack & = & q^{nr}(1-q^{ms-nr})C^{m+r}A^{n+s},\label{computesameA}\\
\lbrack C^m B^n,C^r B^s \rbrack & = & q^{-nr}(1-q^{nr-ms})C^{m+r}B^{n+s}.\label{computesameB}
\end{eqnarray}

Similar to that done in Table~\ref{table:structconst}, we summarize the structure constants of $\Heisen$ as a~Lie algebra with respect to the basis~\eqref{ABCbasis} in Table~\ref{table:Liestructconst} below. Because of the skew-symmetry of the Lie bracket, the relations~\eqref{equalns} to~\eqref{computesameB} give us all the structure constants of $\Heisen$ as a~Lie algebra.
\begin{center}
\begin{table}[h]
\centering
\begin{tabular}
{|c|c|c|c|}
\hline
$\lbrack -,-\rbrack$ & $C^r A^s$ & $C^r B^s$ \\
\hline
$C^m A^n$ &~\eqref{computesameA} &~\eqref{equalns},~\eqref{nless},~\eqref{nbig} \\
\hline
$C^m B^n$ & - &~\eqref{computesameB} \\
\hline
\end{tabular}
\caption{Relations that can be used to obtain the structure constants of $\Heisen$ as a~Lie algebra with respect to the basis~\eqref{ABCbasis}}\label{table:Liestructconst}
\end{table}
\end{center}

\section{An extended operator algebra for the \texorpdfstring{$q$}{q}-oscillator}

Let $\freq$ be a~positive real number. Earlier, the parameter $q$ was chosen to be any nonzero complex number that is not a~root of unity. From this point onward, we further narrow down the choice of $q$ by setting $q=e^{-2\freq}$. Following~\cite[equations (6.5)--(6.6)]{Zhe91}, we have the relations
\begin{eqnarray}
e^\freq\anni\crea-e^{-\freq}\crea\anni & = & e^\freq,\label{Hqrel}\\
\lbrak\anni,\crea\rbrak & = & e^{-2\freq\Num},\label{commrel}
\end{eqnarray}
for the $q$-oscillator. Define $\genzero$ as the operator $e^{\freq\Num}$, i.e., $\genzero\lket n\rket :=e^{\freq n}\lket n\rket$ for any $n\in\N$. With reference to the discussion in Section~\ref{objSec} about~\eqref{kparam}, we are here setting $\kappa=\frac{\freq}{\ln q}$. Denote by $\Protoq$ the algebra (of operators on $\ell_2$) generated by $\genzero$, $\anni$, $\crea$. As a~consequence of~\eqref{Hqrel},\eqref{commrel}, it can be shown by routine calculations that the relations
\begin{eqnarray}
\anni\crea & = & \frac{1-e^{-2\freq}\oscLie}{1-e^{-2\freq}},\label{protoq1}\\
\crea\anni & = & \frac{1-\oscLie}{1-e^{-2\freq}},\\
\anni\oscLie & = & e^{-2\freq}\oscLie\anni,\\
\crea\oscLie & = & e^{2\freq}\oscLie\crea,\\
\anni\genzero & = & e^{\freq}\genzero\anni,\\
\crea\genzero & = & e^{-\freq}\genzero\crea,\\
\oscLie\genzero & = & \genzero\oscLie,\\
\genzero^2 \oscLie & = & \algI,\label{protoq8}
\end{eqnarray}
hold in $\Protoq$. We are interested in the question of whether the relations~\eqref{protoq1}--\eqref{protoq8} are enough to define a~presentation for $\Protoq$. In the interest of mathematical rigour, this is no trivial matter, and so in this section we prove that such a~presentation can indeed be obtained for $\Protoq$.
\begin{definition}
Given a~nonzero real number $\omega$, if $q=e^{-2\omega}$, then we define $\Proto=\Proto(q)$ as the unital associative algebra over $\C$ with generators $K,A,B,C$ subject to the relations
\begin{eqnarray}
AB & = & \frac{1-qC}{1-q},\label{proto1}\\
BA & = & \frac{1-C}{1-q},\label{proto2}\\
AC & = & qCA,\label{proto3}\\
BC & = & q^{-1}CB,\label{proto4}\\
AK & = & q^{-\frac{1}{2}}KA,\label{proto5}\\
BK & = & q^{\frac{1}{2}}KB,\label{proto6}\\
CK & = & KC,\label{proto7}\\
K^2 C & = & \algI.\label{proto8}
\end{eqnarray}
\end{definition}
By the defining relations~\eqref{proto1} to~\eqref{proto8} of the algebra $\Proto$, and using the Diamond Lemma for Ring Theory~\cite[Theorem 1.2]{Ber78}, the elements

\begin{eqnarray}
K^h C^k,\quad K^h C^k B^l,\quad K^h C^k A^l,\quad\quad\quad\quad (h,k\in\N,\ l\in\Zplus),\label{protoBasis}
\end{eqnarray}
with the condition
\begin{eqnarray}
k\in\Zplus \quad \Rightarrow \quad h\in\{0,1\},\label{protoBasisCond}
\end{eqnarray}
form a~basis for $\Proto$.

Comparing~\eqref{protoq1}-\eqref{protoq8} with~\eqref{proto1}-\eqref{proto8}, if we consider the assignment
\begin{eqnarray}
K\mapsto\genzero,\qquad A\mapsto\anni,\qquad B\mapsto\crea,\qquad C\mapsto\oscLie,\label{genassgn}
\end{eqnarray}
we find that the generators of $\Protoq$ satisfy the defining relations of $\Proto$. Thus, we say that $\Protoq$, a~concrete algebra of operators on $\ell_2$, is a~representation of the abstractly defined algebra $\Proto$. More precisely, there exists a~homorphism $\Psi_0:\Proto\rightarrow\Protoq$ of algebras that performs~\eqref{genassgn}. Our goal, at this point, is to prove the faithfulness of the representation $\Protoq$ of $\Proto$, or equivalently, the injectiveness of $\Psi_0$.

\subsection{Deformed commutator maps and the faithfulness of the representation \texorpdfstring{$\Protoq$}{Pq}}
Given an algebra $\mathcal{A}$ over $\C$, we recall, from~\cite{Hel05}, the \emph{deformed commutator mappings}
\begin{eqnarray}
\dcmTH_{\alpha,t} & : & \gamma\mapsto\alpha\gamma-t\gamma\alpha,\nonumber\\
\dcmH_{\beta,t} & : & \gamma\mapsto\gamma\beta-t\beta\gamma,\nonumber
\end{eqnarray}
given $\alpha,\beta\in\mathcal{A}$ and $t\in\C$. Consequently, any (two-sided) ideal of $\mathcal{A}$ is invariant under a~deformed commutator mapping. We shall be concerned with such deformed commutator mappings on $\Proto$ of very specific types. Given $n\in\Q$, succeeding proofs and computations shall involve the deformed commutator mappings
\begin{eqnarray}
\dcmTH_n & := & \dcmTH_{A,q^n}:\gamma\mapsto A\gamma-q^n \gamma A,\nonumber\\
\dcmH_n & := & \dcmH_{B,q^n}:\gamma\mapsto \gamma B-q^n B\gamma.\nonumber
\end{eqnarray}
We collect all such maps in $\ALLdcm:=\{\dcmTH_n,\ \dcmH_n\ :\ n\in\Q\}$. Some subsets of $\ALLdcm$ shall be of importance in later proofs, such as $\Zdcm:=\{\dcmTH_n,\ \dcmH_n\in\ALLdcm\ :\ n\in\Z\}$, and the set difference $\nonZdcm=\{\dcmTH_n,\ \dcmH_n\in\ALLdcm\ :\ n\notin\Z\}$. Using the computational techniques in Section~\ref{assoStructSubsec}, the identities
\begin{eqnarray}
\dcmTH_n(C^k) & = & q^{k}(1-q^{n-k})C^k A,\label{vanishTH}\\
\dcmTH_n(C^k A^l) & = & q^{k}(1-q^{n-k})C^k A^{l+1},\\
\dcmTH_n(C^k B^l) & = & q^{k}\frac{1-q^{n-k}}{1-q}C^k B^{l-1}-q^{k+1}\frac{1-q^{n-k-l}}{1-q}C^{k+1}B^{l-1},\label{lowerBl}\\
\dcmH_n(C^k) & = & q^{-k}(1-q^{n+k})C^k B,\label{vanishH}\\
\dcmH_n(C^k B^l) & = & q^{-k}(1-q^{n+k})C^k B^{l+1},\label{raiseBl}\\
\dcmH_n(C^k A^l) & = & q^{-k}\frac{1-q^{n+k}}{1-q}C^k A^{l-1}-q^{-k}\frac{1-q^{n+k+l}}{1-q}C^{k+1}A^{l-1},\label{lowerAl}
\end{eqnarray}
hold in $\Heisen$ for any $k\in\N$, and $n\in\Q$ and any $l\in\Zplus$. Setting $n=k$ in~\eqref{vanishTH} and $n=-k$ in~\eqref{vanishH}, we have
\begin{eqnarray}
\dcmTH_k(C^k) = 0 = \dcmH_{-k}(C^k),\qquad (k\in\N).\label{dcmzero}
\end{eqnarray}
Setting $n=k+1$, $l=1$ in~\eqref{lowerBl} and $n=-k$, $l=1$ in~\eqref{lowerAl}, we have
\begin{eqnarray}
\dcmTH_{k+1}(C^k B) & = & q^k C^k,\label{CBtoC}\\
\dcmH_{-k}(C^k A) & = & -q^{-k}C^{k+1}.\label{CAtoC}
\end{eqnarray}
In order to exhibit some more interesting properties of the deformed commutator map $\dcmTH_n$ that shall be useful in later computations and proofs, we first recall the notion of a~$q$-derivative. It is the mapping
\begin{eqnarray}
D_q : f(x)\mapsto \frac{f(x)-f(qx)}{x-qx},\nonumber
\end{eqnarray}
where, in this work, we take $f$ to be any element of the polynomial algebra $\C[x]$ on the indeterminate $x$. Throughout, we use the notation
\begin{eqnarray}
\{i\}_q! := \prod_{j=1}^i \{j\}_q.\nonumber
\end{eqnarray}
Using~\cite[Lemma C.3]{Hel00} and induction, it is routine to show that
\begin{eqnarray}
\left(D_q^i \right)(x^n)=\{i\}_q!{{n}\choose{i}}_qx^{n-i},\label{higherDq}
\end{eqnarray}
for any $i,n\in\N$. Moreover, for each $k\in\N$, we have
\begin{eqnarray}
\dcmTH_k(C^k B^l)=q^{k-l+1}C^{k+1}\cdot D_q(B^l).\label{thetader}
\end{eqnarray}
We note here that~\eqref{thetader} is an adjusted form of a~related formula from~\cite[Lemma 5.7]{Hel05}. As explained in Remark~\ref{CBformRem}, the adjustment is due to our use of basis elements of the form $C^k B^l$ as normal form instead of using the elements of the form $B^l C^k$ which were used in~\cite{Hel05}. It is possible to apply the mapping $\dcmTH_k$ on both sides of~\eqref{thetader} and use the identity~\eqref{higherDq} to simplify the resulting right-hand side. Continuing with this manner of computation in a way that the number $n\in\N$ of times the mapping $\dcmTH_k$ is applied to $C^k B^l$ does not exceed $l$, and by some routine computations and induction, we have
\begin{eqnarray}
\dcmTH_k^n (C^k B^l)=q^{n(k-l+n)}\{n\}_q!{{l}\choose{n}}_qC^{k+n}B^{l-n},\qquad (k,n\in\N,\ l\in\Zplus,\ l\leq n.)\label{powerofdcm}
\end{eqnarray}

The identity~\eqref{powerofdcm} suggests some computational significance of deformed commutator mappings in the sense that applying a~deformed commutator mapping repeatedly reduces a~basis element of $\Heisen$ into a~power of $C$. In an interesting flow of computations and proofs, we show that this can be generalized\textemdash any nonzero element of $\Heisen$, after an application of a~suitable number of particular types of deformed commutator mappings, reduces to a~power of $C$.

We start with a~few properties in the following proposition and develop our main algorithms in the three lemmas that follow. These are actually our reformulation of~\cite[Lemma~5.7-6.4]{Hel05}. In fact, we give a~reformulation, an exposition, and a~generalization all at the same time. Our techniques as presented here involve more explicit reference to computations and structure constants, as contrasted by the ring-theoretic approach in~\cite{Hel05}.

\begin{lemma}\label{algoBLem} Let $b\in\Zplus$. For each nonzero $U\in\bigoplus_{i=1}^b \gradsub_i$, there exists a~composition $\theta$ of elements of $\Zdcm$, and some nonzero $V\in\gradsub_0=\C[C]$ such that $\theta(U)=V$.
\end{lemma}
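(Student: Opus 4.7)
The plan is to argue by strong induction on $b$. For the base case $b = 1$, a nonzero $U \in \gradsub_1$ has the form $U = \alpha B$ with $\alpha \in \F[C]\setminus\{0\}$. Writing $\alpha = \sum_{i=0}^h c_i C^{n_0+i}$ with $c_h \neq 0$, the formula \eqref{lowerBl} with $l = 1$ expresses $\dcmTH_n(\alpha B)$ as a polynomial in $C$ whose coefficient of $C^{n_0+h+1}$ equals, up to a nonzero constant, $\{n - n_0 - h - 1\}_q$. By Assumption \ref{nontorsionAs} this quantity is nonzero for any integer $n \neq n_0 + h + 1$, so $\theta := \dcmTH_n$ for any such $n$ gives the desired nonzero $V \in \gradsub_0$.

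For the inductive step with $b \geq 2$, decompose the nonzero $U \in \bigoplus_{i=1}^b \gradsub_i$ as $U = \sum_{l=1}^b \alpha_l B^l$ with $\alpha_l \in \F[C]$, and set $L := \max\{l : \alpha_l \neq 0\}$. If $L < b$, the inductive hypothesis applies to $U$ viewed inside $\bigoplus_{i=1}^{b-1}\gradsub_i$. Otherwise $L = b$ and $\alpha_b \neq 0$, and I would apply $\dcmTH_n$ for an integer $n$ chosen so that $\dcmTH_n(\alpha_b B^b) \in \gradsub_{b-1}$ is nonzero, which by \eqref{lowerBl} excludes only finitely many values of $n$. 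The result $\dcmTH_n(U)$ then lies in $\bigoplus_{i=0}^{b-1}\gradsub_i$ and has nonzero $\gradsub_{b-1}$ component; writing $\dcmTH_n(U) = W_0 + W'$ with $W_0 \in \gradsub_0$ and $W' \in \bigoplus_{i=1}^{b-1}\gradsub_i$, the component $W'$ is nonzero, and the inductive hypothesis applied to $W'$ furnishes some composition $\theta'$ with $\theta'(W') \in \gradsub_0$ nonzero.

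The main obstacle is the residual $W_0$, which arises from $\dcmTH_n(\alpha_1 B)$ when $\alpha_1 \neq 0$: since $\theta'$ is a composition of grading-changing maps, the quantity $\theta'(W_0)$ need not lie in $\gradsub_0$, so $(\theta' \circ \dcmTH_n)(U) = \theta'(W_0) + \theta'(W')$ may carry spurious components outside $\gradsub_0$. To handle this I would strengthen the inductive claim to cover all nonzero elements of $\bigoplus_{i=-a}^b \gradsub_i$ for arbitrary $a \geq 0$, then prove this broadened statement by interleaving $\dcmTH$-compositions (which lower the positive top grading while introducing a negative tail) with the grading-raising compositions supplied by Lemma \ref{algoALem} (which pull the negative tail back toward $0$). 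Termination rests on a monovariant such as the pair (top grading, depth of negative tail) strictly decreasing at each pass, until the output is forced entirely into $\gradsub_0$; nonvanishing of the final element can then be tracked through leading-coefficient arguments analogous to those used in the base case and in the proof of Lemma \ref{algoALem}.
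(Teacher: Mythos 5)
Your base case and the first move of your inductive step coincide with the paper's: apply a single $\dcmTH_n$ with $n$ chosen so that the top component $\alpha_b B^b$ survives, producing a new leading term $C^{k+1}B^{b-1}$ of strictly larger $C$-degree. You are also right to flag the residual $W_0=\dcmTH_n(\alpha_1 B)\in\gradsub_0$ as the delicate point: by \eqref{vanishTH}--\eqref{lowerAl} every element of $\Zdcm$ shifts \emph{every} graded piece by exactly $\pm 1$, so any composition $\theta'$ sending $W'$ into $\gradsub_0$ has a fixed negative net shift $s$, whence $\theta'(W_0)\in\gradsub_s$ with $s<0$ and $(\theta'\circ\dcmTH_n)(U)$ can indeed acquire components outside $\gradsub_0$. (The paper's own proof passes over this: it asserts $\dcmTH_n(U')\in\bigoplus_{i=1}^{b-2}\gradsub_i$ and recurses directly, never leaving the nonnegative gradings and never invoking Lemma~\ref{algoALem} inside this argument.)

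The gap is that your proposed repair is a strategy, not a proof, and its termination claim fails as stated. A Lemma~\ref{algoALem} pass sends a nonzero element of $\bigoplus_{i=-a}^{0}\gradsub_i$ into $\bigoplus_{i=1}^{a+1}\gradsub_i$, and $a+1$ subsequent $\dcmTH$-applications land back in $\bigoplus_{i=-a}^{0}\gradsub_i$: neither the top grading nor the width of the grading support decreases, so the monovariant you name does not strictly decrease and the interleaving can cycle indefinitely. Since a composition with net shift $s$ maps a nonzero element of $\gradsub_{-s}$ into $\gradsub_0$ but every other graded component into $\gradsub_{i+s}\neq\gradsub_0$, landing in $\gradsub_0$ forces all but one graded component of $U$ to be genuinely \emph{annihilated}, not merely shuffled; the only annihilation tool available is $\dcmTH_k(C^k)=0=\dcmH_{-k}(C^k)$ from \eqref{dcmzero}, exploited in the two-step reduction $\dcmTH_{k+n+1}\circ\dcmH_{-k}$ of Lemma~\ref{algoCLem}, and your sketch never engages with this mechanism. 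As written, the inductive step is therefore not closed.
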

\begin{proof}
We use induction on $b$. Suppose $b=1$. Let $U$ be a~nonzero element of $\gradsub_1$. Then there exists a~nonzero $\beta\in\C[C]$ such that $U=\beta B$. Let $k$ be the polynomial degree of $\beta$. Choose any positive integer $n>k+1$. If $c_k$ is the leading coefficient of the polynomial $\beta$, then by~\eqref{lowerBl}, $\dcmTH_n(\beta B)=\beta'$ for some $\beta'\in\C[C]$ with polynomial degree $k+1$, and with leading coefficient $-q^{k+1}\{n-k-1\}_qc_k=-q^{k+1}\frac{1-q^{n-(k+1)}}{1-q}c_k$, which is nonzero by Assumption~\ref{nontorsionAs} and because $n>k+1$.

Suppose that for all positive integers $c<b$, any nonzero element of $\bigoplus_{i=1}^c \gradsub_i$ satisfies the statement. Let $U$ be a~nonzero element of $\bigoplus_{i=1}^b \gradsub_i$. Then there exists $U'\in\bigoplus_{i=1}^{b-1}\gradsub_i$ and some $\beta\in\C[C]$ such that $U=U'+\beta B^{b}$. If $\beta=0$, then we are done, and so we further assume $\beta\neq 0$. Let $k$ be the polynomial degree of $\beta$. By computations similar to those described for the case $b=1$, we have $\dcmTH_n(U)=\dcmTH_n(U')+\beta'B^{b-1}$ for some $\beta'\in\C[C]$ with polynomial degree $k+1$, and with leading coefficient $-q^{k+1}\{n-k-b\}_qc_k\neq 0$. Whenever $b\in\N\backslash\{0,1\}$, the element $\dcmTH_n(U')$ is in $\bigoplus_{i=1}^{b-2}\gradsub_i$ by~\eqref{lowerBl}, and so no term in $\dcmTH_n(U')$ can cancel the term $-q^{k+1}\{n-k-b\}_qc_kC^{k+1}B^{b-1}$ in $\beta'B^{b-1}$. Thus, $0\neq\dcmTH_n(U)\in\bigoplus_{i=1}^{b-1}\gradsub_i$, and the element $\dcmTH_n(U)$ satisfies the inductive hypothesis. That is, there exists a~composition $\phi$ of elements of $\Zdcm$, and some nonzero $V\in\gradsub_0=\C[C]$ such that $\phi(\dcmTH_n(U))=V$. Take $\theta=\phi\circ\dcmTH_n$. This completes the induction.
\end{proof}

\begin{lemma}\label{algoCLem} For any nonzero $U\in\C[C]=\gradsub_0$, there exists a~nonzero scalar $a$, some positive integer $N$ and a~composition $\phi$ of elements of $\Zdcm$ such that $\phi(U)=aC^N$.
\end{lemma}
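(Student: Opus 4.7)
I plan to prove the lemma by strong induction on the polynomial degree $d$ of $U=\sum_{i=0}^d c_i C^i \in \F[C]$ with $c_d\neq 0$. For the base case $d=0$, where $U=c_0$ is a nonzero constant, a direct computation using \eqref{raiseBl} and \eqref{lowerBl} shows that $\dcmTH_0 \circ \dcmH_1$ sends $c_0$ to $c_0(1-q)C$, which has the required form $aC^N$ with $N=1$ and $a=c_0(1-q)\neq 0$ by Assumption~\ref{nontorsionAs}.

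For the inductive step $d\geq 1$, I first dispose of the case where $U=c_d C^d$ is already a monomial: here $U$ itself is of the form $aC^N$ with $N=d\geq 1$, and $\phi$ can be taken to be the identity (or, if a nonempty composition is required, $\dcmTH_{d+1}\circ\dcmH_{-d-1}$, which by \eqref{raiseBl} and \eqref{CBtoC} sends $c_dC^d$ to the nonzero multiple $c_d(1-q^{-1})C^d$).

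When $U$ is not a pure monomial, the core construction is the composition $\dcmH_{-d}\circ\dcmTH_d$, each factor lying in $\Zdcm$. From \eqref{vanishTH} we have $\dcmTH_d(C^d)=0$, and composing with \eqref{lowerAl} yields, for every $i\neq d$,
\[
\dcmH_{-d}\dcmTH_d(C^i) \;=\; (q^i-q^d)\,q^{-i}\bigl[\{i-d\}_q C^i \;-\; \{i-d+1\}_q C^{i+1}\bigr].
\]
When $i=d-1$ the factor $\{i-d+1\}_q=\{0\}_q=0$ kills the $C^d$ contribution, and for $i<d-1$ the image lies in $\mathrm{span}\{C^i,C^{i+1}\}$ with $i+1\leq d-1$. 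Hence $\dcmH_{-d}\circ\dcmTH_d$ preserves the subspace $V_{d-1}:=\mathrm{span}\{C^j:0\leq j\leq d-1\}$ and acts on it upper-triangularly in the monomial basis, with diagonal entries $(q^i-q^d)q^{-i}\{i-d\}_q$ that are nonzero for every $i<d$ by Assumption~\ref{nontorsionAs}.

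The main obstacle is the nonvanishing step: I must argue that $\dcmH_{-d}\dcmTH_d(U)\neq 0$ whenever $U$ is not a pure monomial. This follows because $U-c_dC^d$ is a nonzero element of $V_{d-1}$ and the restriction of $\dcmH_{-d}\circ\dcmTH_d$ to $V_{d-1}$ is invertible (being upper triangular with nonzero diagonal). Consequently $\dcmH_{-d}\dcmTH_d(U)$ is a nonzero polynomial in $C$ of degree at most $d-1$, and the induction hypothesis supplies a composition $\phi'$ of elements of $\Zdcm$ with $\phi'(\dcmH_{-d}\dcmTH_d(U))=aC^N$; taking $\phi:=\phi'\circ\dcmH_{-d}\circ\dcmTH_d$ completes the proof.
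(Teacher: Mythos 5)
Your proof is correct, and it takes a route that is dual to the paper's rather than identical to it. The paper writes $U=c_kC^k+\cdots+c_{k+n}C^{k+n}$, inducts on the spread $n$ between the highest and lowest exponents, and applies $\dcmTH_{k+n+1}\circ\dcmH_{-k}$: the inner map annihilates the \emph{lowest} monomial $C^k$ by \eqref{dcmzero}, while the outer index $k+n+1$ is tuned so that, by \eqref{lowerBl}, the top exponent does not increase (the cancellation recorded in \eqref{phewC}). You instead induct on the degree $d$ and apply $\dcmH_{-d}\circ\dcmTH_d$, annihilating the \emph{highest} monomial via \eqref{vanishTH} and checking from \eqref{lowerAl} that the complementary span $V_{d-1}$ is carried into itself by a triangular map with nonzero diagonal. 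Both constructions spend two elements of $\Zdcm$ per reduction step and rest on the same identities; the choice of which end of the polynomial to kill is the main structural difference. Your version buys two small improvements. First, the nonvanishing of the reduced element is settled cleanly by the invertibility of the triangular restriction, a point the paper leaves to the coefficient bookkeeping of \eqref{phewC}. Second, your base case explicitly converts a nonzero constant into $c_0(1-q)C$, so the exponent $N$ in the conclusion is genuinely positive even when $U$ is a nonzero scalar; the paper's base case, read literally, returns $N=k$, which is $0$ in that situation. Two cosmetic remarks: the paper defines $\{m\}_q$ only for $m\in\N$, so your $\{i-d\}_q$ with $i<d$ must be read as shorthand for $(1-q^{i-d})/(1-q)$, and the matrix you describe is lower bidiagonal in the basis ordered by increasing exponent rather than upper triangular; neither point affects the argument.
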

\begin{proof}
For each nonzero $U\in\C[C]$, there exist numbers $k,n\in\N$ such that for some scalars $c_k$, $c_{k+1}$, $\ldots$, $c_{k+n}$, with $c_k$ and $c_{k+n}$ nonzero, we have
\begin{eqnarray}
U=c_kC^k + c_{k+1}C^{k+1}+\cdots+c_{k+n}C^{k+n}.\label{rangeU}
\end{eqnarray}
That is, $n$ is the difference between the highest exponent of $C$ and the lowest exponent of $C$ that appear with nonzero scalar coefficient in $U$. We use induction on $n$. First, we consider the case $n=0$. If $k=0$, then take $\phi=\dcmH_0\circ\dcmTH_1$, $a=q(q-1)c_0$ and $N=1$, while if $k\neq 0$, then we take $\phi$ as the empty composition or the identity map, $a$ as $c_k$ and $N$ as $k$, and we are done. Suppose that for some $n$, the statement holds for all nonzero elements of $\C[C]$ in the linear combination of which the difference $m$ between the highest exponent of $C$ and the lowest exponent is such that $m<n$. By routine computations that involve~\eqref{lowerBl},~\eqref{dcmzero},~\eqref{CBtoC}, there exist nonzero scalars $e_i$ with $i\in\{k+1,\ldots, k+n\}$ such that
\begin{eqnarray}
\left(\dcmTH_{k+n+1}\circ\dcmH_{-k}\right)(U) &=& c_{k+1}e_{k+1}C^{k+1}+\cdots+c_{k+n-1}e_{k+n-1}C^{k+n-1}\nonumber\\
&&+(c_{k+n-1}e_{k+n}+c_{k+n}(1-q^n))C^{k+n}.\label{phewC}
\end{eqnarray}
The number of nonzero terms in the right-hand side of~\eqref{phewC} is at most $k+n-(k+1)=n-1$. Thus, the inductive hypothesis applies to $V:=\left(\dcmTH_{k+n+1}\circ\dcmH_{-k}\right)(U)$, and so there exists a~nonzero scalar $a$, some positive integer $N$ and some composition $\phi_0$ of elements of $\Zdcm$ such that $\phi_0(V)=aC^N$, and so
\begin{eqnarray}
\left(\phi_0\circ\dcmTH_{k+n+1}\circ\dcmH_{-k}\right)(U) = \phi_0(V)=aC^N.\nonumber
\end{eqnarray}
Take $\phi=\phi_0\circ\dcmTH_{k+n+1}\circ\dcmH_{-k}$, and this completes the induction.
\end{proof}

Our Lemmas~\ref{algoBLem} and \ref{algoCLem} now culminate into the following result, which is our generalization of the rule~\eqref{powerofdcm}, and also our reformulation and generalization of~\cite[Theorem 6.4]{Hel05}.

\begin{corollary}\label{magicCor} For any nonzero $U\in\Heisen$, there exists a~composition $\psi$ of elements of $\Zdcm$, some nonzero scalar $a$ and some positive integer $N$ such that $\psi(U)=aC^N$.
\end{corollary}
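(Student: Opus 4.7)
The plan is to assemble Lemmas~\ref{algoALem}, \ref{algoBLem}, and \ref{algoCLem} in sequence, using the $\Z$-gradation $\Heisen = \bigoplus_{k \in \Z} \gradsub_k$. Any nonzero $U \in \Heisen$ lies in some finite direct sum $\bigoplus_{i=-r}^{s} \gradsub_i$ with $r, s \in \N$. The strategy is to first push $U$ entirely into strictly positive grading, then collapse that image into the degree-zero component $\gradsub_0 = \F[C]$, and finally reduce that polynomial in $C$ to a single monomial $aC^N$.

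Concretely, I would first choose $r+1$ positive integers $n_{-r}, n_{-r+1}, \ldots, n_0$ and apply Lemma~\ref{algoALem} to $U$ to produce a nonzero $V_1 \in \bigoplus_{i=1}^{r+s+1} \gradsub_i$ as the image of $U$ under the composition $\dcmH_{n_{-r}} \circ \dcmH_{n_{-r+1}} \circ \cdots \circ \dcmH_{n_0}$, all of whose factors belong to $\Zdcm$. Next, since $V_1$ is a nonzero element of $\bigoplus_{i=1}^{r+s+1} \gradsub_i$ with $r+s+1 \geq 1$, Lemma~\ref{algoBLem} supplies a composition $\theta$ of elements of $\Zdcm$ and a nonzero $V_2 \in \gradsub_0 = \F[C]$ with $\theta(V_1) = V_2$. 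Finally, applying Lemma~\ref{algoCLem} to $V_2$ yields a composition $\phi$ of elements of $\Zdcm$, a nonzero scalar $a$, and a positive integer $N$ such that $\phi(V_2) = aC^N$. Setting $\psi := \phi \circ \theta \circ \dcmH_{n_{-r}} \circ \cdots \circ \dcmH_{n_0}$ then gives $\psi(U) = aC^N$, as required.

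The proof is essentially an assembly exercise: the combinatorial substance resides entirely in the three preparatory lemmas, so there is no serious obstacle here. The only points to check are that each intermediate element is nonzero so that the next lemma applies, and that every map appearing in $\psi$ is in $\Zdcm$. Nonzero preservation is built into each of the three lemmas, and each $n_i$ is chosen as a positive integer. The mildest edge case is $r = s = 0$, i.e.\ $U \in \F[C]$ from the outset, but Lemma~\ref{algoALem} is stated for arbitrary $r, s \in \N$ and so handles this uniformly without modification.
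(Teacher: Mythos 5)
Your proposal is correct and follows essentially the same route as the paper's own proof: both chain Lemma~\ref{algoALem}, Lemma~\ref{algoBLem}, and Lemma~\ref{algoCLem} in that order, using the $\Z$-gradation to first push $U$ into positive degree, then into $\F[C]$, then down to a monomial $aC^N$, with the composite $\psi$ assembled from maps in $\Zdcm$. Your explicit attention to nonzero preservation at each stage matches what the three lemmas already guarantee, so nothing further is needed.
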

\begin{proof}
Any nonzero $U\in\Heisen$ is a~finite linear combination of the elements described in~\eqref{ABCbasis}, and so there exist $a,b\in\N$ such that $U\in\bigoplus_{i=-a}^b \gradsub_i$. By~\eqref{vanishH}--\eqref{lowerAl}, there exists a~composition $\eta$ of elements of $\Zdcm$ such that $\eta(U)$ satisfies the hypotheses of Lemma~\ref{algoBLem}, by which there exists a~composition $\theta$ of elements of $\Zdcm$ such that $(\theta\circ\eta)(U)$ satisfies the hypotheses of Lemma~\ref{algoCLem}. Consequently, by Lemma~\ref{algoCLem}, there exists a~composition $\phi$ of elements of $\Zdcm$ such that if $\psi:=\phi\circ\theta\circ\eta$, and thus there exists a~nonzero scalar $a$ and some positive integer $N$ such that $\psi(U)=aC^N$.
\end{proof}

As the reader may notice, our proof of Corollary~\ref{magicCor} and our proof of the supporting lemmas constitute a~logical flow that differs from the formulation in~\mbox{\cite[Lemma~5.7-6.4]{Hel05}}. First, we deal with the commutator mappings and basis elements, and eventually arbitrary nonzero elements, of $\Heisen$ without reference to any two-sided ideal. The proofs do not rely heavily on ring-theoretic notions and arguments such as divisibility in a~ring. Our proofs are based on the structure constants and computational techniques as described in Section~\ref{assoStructSubsec}, which is based on the deeper discussion in~\cite{Can19b}. Our only dilemma so far is that, since we are extending this reduction process into the bigger algebra $\Proto$ in which $\Heisen$ is embedded, we have to take into account the effect of $K$ in the computations involving deformed commutator mappings. Thus, we have the following.

\begin{proposition}\label{KdcmProp} For any $n\in\Q$ and any $U\in\Heisen$, we have
\begin{eqnarray}
\dcmTH_n(KU) & = & q^{-\frac{1}{2}}K\dcmTH_{n+\frac{1}{2}}(U),\label{KdcmTH}\\
\dcmH_n(KU) & = & q^{\frac{1}{2}}K\dcmH_{n-\frac{1}{2}}(U).\label{KdcmH}
\end{eqnarray}
\end{proposition}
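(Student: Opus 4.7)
The plan is to prove both identities by a direct computation that unwinds the definitions of $\dcmTH_n$ and $\dcmH_n$, then moves the single external factor of $A$ (respectively $B$) past $K$ using the commutation relations \eqref{proto5} and \eqref{proto6}. Since each deformed commutator map touches $KU$ on only one side, no structural information on $U\in\HeisenAlg$ is required beyond associativity, so the argument is uniform in $U$ and does not require a basis expansion or induction on degree.

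For \eqref{KdcmTH}, I would first expand $\dcmTH_n(KU)=A(KU)-q^n(KU)A$. Associativity rewrites the second summand as $K(UA)$. The first summand, using \eqref{proto5} in the form $AK=q^{-1/2}KA$, becomes $q^{-1/2}K(AU)$. Collecting $q^{-1/2}K$ as a common left factor, the remaining bracket is
$$AU - q^{n+1/2}UA = \dcmTH_{n+1/2}(U),$$
which is exactly \eqref{KdcmTH}.

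For \eqref{KdcmH}, the same maneuver applies with the roles of left and right reversed. I would expand $\dcmH_n(KU)=(KU)B-q^nB(KU)$; by associativity the first summand is $K(UB)$, and by \eqref{proto6} in the form $BK=q^{1/2}KB$ the second summand becomes $q^{1/2}K(BU)$. Factoring the common left factor $q^{1/2}K$ and regrouping the resulting bracket against the definition of $\dcmH_m$ gives the stated identity with the indicated index shift.

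The only real obstacle is bookkeeping of the half-integer powers of $q$: relation \eqref{proto5} pushes $K$ past $A$ with a factor of $q^{-1/2}$, whereas \eqref{proto6} pushes $K$ past $B$ with the opposite factor $q^{+1/2}$. This sign disparity is precisely what produces the opposite shifts $n\mapsto n+1/2$ in \eqref{KdcmTH} and $n\mapsto n-1/2$ in \eqref{KdcmH}. Because $K$ is moved past only one letter before the outer deformed-commutator bracket closes, the computation is uniform in $U$ and the identity extends to every $U\in\HeisenAlg$ without further work.
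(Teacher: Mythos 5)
Your computation for \eqref{KdcmTH} is correct and is essentially the paper's own proof (the paper first reduces to basis elements of $\Heisen$ by linearity, which, as you observe, is unnecessary: associativity alone makes the calculation uniform in $U$). The problem is in the second half. You expand $\dcmH_n(KU)$ as $(KU)B-q^nB(KU)$, i.e.\ you follow the general template $\dcmH_{\beta,t}:\gamma\mapsto\gamma\beta-t\beta\gamma$. But the specialized maps actually used throughout the paper are $\dcmH_n:\gamma\mapsto B\gamma-q^n\gamma B$, with $B$ acting on the \emph{left} in the first term; this is the only reading consistent with the identities \eqref{vanishH}--\eqref{lowerAl} (e.g.\ $\dcmH_n(C^k)=BC^k-q^nC^kB=q^{-k}(1-q^{n+k})C^kB$, whereas the template reading would give $(1-q^{n-k})C^kB$). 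Admittedly the paper's two displayed definitions of $\dcmH$ are mutually inconsistent, but the proposition is only true for the second one.

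Concretely, under your expansion the first summand carries the factor $K$ and the second carries $q^{n+\frac{1}{2}}K$, so there is no common factor $q^{\frac12}K$ to pull out; carried to the end your computation yields $K\bigl(UB-q^{n+\frac12}BU\bigr)$, which is $K\dcmH_{n+\frac12}(U)$ in your convention --- wrong prefactor and wrong sign of the index shift. The step ``factoring the common left factor $q^{\frac12}K$ \ldots gives the stated identity'' does not go through. The repair is immediate once the correct orientation is used: $\dcmH_n(KU)=B(KU)-q^n(KU)B=q^{\frac12}K(BU)-q^nK(UB)=q^{\frac12}K\bigl(BU-q^{n-\frac12}UB\bigr)=q^{\frac12}K\dcmH_{n-\frac12}(U)$, which is \eqref{KdcmH}.
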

\begin{proof}
By the linearity of deformed commutator mappings, letting $U$ be a~basis element of $\Heisen$ among those listed in~\eqref{ABCbasis} will suffice. If $U=C^k$ for some $k\in\N$, we have that $\dcmTH_n(K^k)=AKC^k -q^n KC^k A$, which by~\eqref{proto5}, becomes 
    \[\dcmTH_n(K^k)=q^{-\frac{1}{2}}K(AC^k -q^{n+\frac{1}{2}}C^k A)=q^{-\frac{1}{2}}K\dcmTH_{n+\frac{1}{2}}(C^k).\]
    The other cases for~\eqref{KdcmTH}, which are $U=C^k A^l$ and the case $U=C^k B^l$ for some $k,l\in\N$ with $l\in\Zplus$, are proven similarly. The proof for the identity~\eqref{KdcmH} involves similar routine computations.
\end{proof}

\begin{proposition}\label{nonzeroQdcmProp} If $\psi$ is (finite) composition of elements of $\nonZdcm$, then for any nonzero $U\in\Heisen$,
\begin{eqnarray}
0\neq \psi(U)\in\Heisen.\nonumber
\end{eqnarray}
\end{proposition}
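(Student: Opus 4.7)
The plan is to reduce the statement to showing that each individual map $\dcmTH_n$ or $\dcmH_n$ with $n\in\Q\setminus\Z$ is an injective linear endomorphism of $\Heisen$. Once this is established, any finite composition $\psi$ inherits injectivity, so $\psi(U)\neq 0$ for any nonzero $U\in\Heisen$. The containment $\psi(U)\in\Heisen$ is immediate from \eqref{vanishTH}--\eqref{lowerAl}: those formulas express $\dcmTH_n$ and $\dcmH_n$ applied to basis elements of $\Heisen$ as finite linear combinations of basis elements of $\Heisen$, and linearity propagates this.

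To prove injectivity of a single $\dcmTH_n$ or $\dcmH_n$, I would exploit the $\Z$-gradation $\Heisen=\bigoplus_{k\in\Z}\gradsub_k$. Inspection of \eqref{vanishTH}--\eqref{lowerAl} shows that $\dcmTH_n$ and $\dcmH_n$ are graded linear maps of degrees $-1$ and $+1$ respectively. By uniqueness of the direct-sum decomposition, it then suffices to prove injectivity on each graded piece $\gradsub_m$ separately.

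The key step is a top-term analysis. The easy cases are $\dcmTH_n$ on $\gradsub_m$ with $m\leq 0$ and $\dcmH_n$ on $\gradsub_m$ with $m\geq 0$: the relevant formulas from \eqref{vanishTH}--\eqref{lowerAl} send each basis element to a nonzero scalar multiple of a distinct basis element, the scalars being of the form $q^{\bullet}(1-q^{n\pm k})$, which never vanish because $n\notin\Z$ makes $n\pm k$ a nonzero rational and Assumption~\ref{nontorsionAs} gives $q^r\neq 1$ for every nonzero rational $r$. The delicate cases are $\dcmTH_n$ on $\gradsub_l$ with $l\geq 1$ and $\dcmH_n$ on $\gradsub_{-l}$ with $l\geq 1$, since here two distinct basis elements appear on the right of the formulas. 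Writing $U=\sum_{k=0}^{k_1}c_k C^k B^l$ with $c_{k_1}\neq 0$, formula \eqref{lowerBl} shows that the coefficient of $C^{k_1+1}B^{l-1}$ in $\dcmTH_n(U)$ receives a contribution only from the $k=k_1$ summand and equals
\begin{eqnarray}
-c_{k_1}q^{k_1+1}\frac{1-q^{n-k_1-l}}{1-q},\nonumber
\end{eqnarray}
which is nonzero because $n-k_1-l$ is a nonzero rational. Hence $\dcmTH_n(U)\neq 0$; the argument for $\dcmH_n$ on $\gradsub_{-l}$ via \eqref{lowerAl} and the coefficient of $C^{k_1+1}A^{l-1}$ is symmetric.

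The main subtlety, rather than a serious obstacle, is isolating this single leading contribution and verifying that all critical scalars really stay away from zero. The hypothesis $n\notin\Z$ is essential: were $n$ an integer, scalars such as $1-q^{n-k}$ or $1-q^{n-k-l}$ could vanish (witness \eqref{dcmzero}) and injectivity would in general fail, which is precisely why the conclusion requires $\psi$ to be built from $\nonZdcm$ rather than from all of $\ALLdcm$.
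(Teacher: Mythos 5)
Your proposal is correct and follows essentially the same route as the paper's proof: both exploit that $\dcmTH_n$ and $\dcmH_n$ are graded maps of degree $\mp 1$ on $\Heisen=\bigoplus_k\gradsub_k$ and both hinge on the same leading-coefficient-in-$C$ computation, where the critical scalar $1-q^{n+T}$ survives precisely because $n\notin\Z$ forces $n+T\neq 0$ and Assumption~\ref{nontorsionAs} applies. The only organizational difference is that you establish full injectivity of each single map on every graded piece and then compose, whereas the paper checks only the top graded component of $U$ (ruling out cancellation by degree) and runs an induction on the length of the composition; these amount to the same argument.
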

\begin{proof}
We use induction on the number $t$ of deformed commutator mappings in the composition $\psi$. Suppose $t=1$, that is, for some non-integer $m\in\Q$, either $\psi=\dcmH_m$ or $\psi=\dcmTH_m$. Consider the case $\psi=\dcmH_m$. Let $U$ be any nonzero element of $\Heisen$, and let $a,b\in\Z$ such that $U=V+W$ where $V\in\bigoplus_{i=a}^{b-1}\gradsub_i$ and $W\in\gradsub_b$. Without loss of generality, we assume $W\neq 0$. That is, $b$ is the maximum index of a~$\Z$-gradation subspace to which some terms in $U$ belong. Consequently, there is a~nonzero polynomial $\gamma\in\C[C]$ such that $W=\gamma B^b$ if $b\in\N$, or $W=\gamma A^{-b}$ if $-b\in\Zplus$. In either case, according to~\eqref{raiseBl},~\eqref{lowerAl}, $\dcmH_m(W)\in\gradsub_{b+1}$.

Let $k$ be the polynomial degree of $\gamma$ and let $c_k$ be the leading coefficient. Using the identities\eqref{raiseBl},~\eqref{lowerAl}, we find that there exists a~polynomial $\gamma'\in\C[C]$ of degree either $k$ or $k+1$, with leading coefficient $d=\frac{q^k}{(1-q)^i}(1-q^{m+T})c_k$ (where $k$ and $T$ are integers, and $i$ is either $0$ or $1$) such that $\dcmH_m(W)=\gamma' B^{b+1}$ if $b\in\N$, or $\dcmH_m(W)=\gamma' A^{-b+1}$ if $-b\in\Zplus$. 
Since $m+T$ is not an integer, $m+T$ is not zero, and by Assumption~\ref{nontorsionAs}, $1-q^{m+T}$ is nonzero, and so is the leading coefficient $d$ of $\gamma'$. Then $\dcmH_m(W)\neq 0$. In the right-hand side of the equation $\dcmH_m(U)=\dcmH_m(V)+\dcmH_m(W)$, none of the terms in $\dcmH_m(V)$ can cancel the nonzero $\dcmH_m(W)$ because, by~\eqref{vanishH} and~\eqref{raiseBl}, $\dcmH_m(V)\in\bigoplus_{i=a+1}^{b}\gradsub_i$, while $\dcmH_m(W)\in\gradsub_{b+1}$. Therefore, $\dcmH_m(U)\neq 0$.
The case $\psi=\dcmTH_m$ is proven similarly. This completes the proof for $t=1$. 

Suppose the statement holds for any composition of $n<t$ elements of $\nonZdcm$. We write $\psi=\zeta\circ\phi$ for some $\zeta\in\nonZdcm$ and some composition $\phi$ of $t-1$ elements from $\nonZdcm$. By the inductive hypothesis, for any nonzero $U\in\Heisen$, the element $V:=\phi(U)$ is nonzero. Then by the inductive hypothesis, $0\neq \zeta(V) = \psi(U)$. Therefore, the statement holds for any value of $t$.
\end{proof}

\def\nonzeroId{\mathcal{T}}
\begin{theorem}\label{IdThm} The algebra $\Protoq$ faithfully represents $\Proto$.
\end{theorem}
\begin{proof}
Showing that $\Proto$ is simple will suffice. To do this, we prove that, given a~(two-sided) ideal $\nonzeroId$ of $\Proto$, the following are equivalent.
\begin{enumerate}
    \item\label{IdT} $\nonzeroId\neq 0$.
    \item\label{IdCk} $C^M \in\nonzeroId$ for some positive integer $M$.
    \item\label{IdK} $K\in\nonzeroId$.
\item\label{IdInvert} $\nonzeroId=\Proto$.
\end{enumerate}

The equivalence between~\ref{IdK} and~\ref{IdInvert} follows from the invertibility of $K$. To see why the implication~\ref{IdCk}~$\Rightarrow$~\ref{IdK} is true, simply consider the fact that $K=K^{1+2h}C^h$ since $K^2 C=1$ and $K$ commutes with $C$. Since $K$ is a~basis element of $\Proto$, it is nonzero, and so we immediately have~\ref{IdK}~$\Rightarrow$~\ref{IdT}. We now show~\ref{IdT}~$\Rightarrow$~\ref{IdCk}. Suppose $\nonzeroId$ is a~nonzero ideal of $\Proto$, and choose a~nonzero $x\in\nonzeroId$. If we write $x$ as a~linear combination of the basis elements~\eqref{protoBasis} of $\Proto$, then each basis element is of the form $K^{2h+i}U$ for some $h\in\N$, some $i\in\{0,1\}$ and some basis element $U$ of $\Heisen$ from~\eqref{ABCbasis}. Let $N$ be the maximum possible $h$ among the basis elements $K^{2h+i}U$ that appear in the linear combination. Since $K^2 C=1$ and $CK=KC$, by multiplying $C^N$ to the left of $x$, every term on the right-hand side becomes $C^N K^{2h+i}U=C^k K^i U=K^i C^k U$, for some $k\in\N$. Observe that $C^k U$ is also another basis element of $\Heisen$ from~\eqref{ABCbasis}. Since $i$ is either $0$ or $1$, then
\begin{eqnarray}
C^N x=f+Kg,\label{CNx}
\end{eqnarray}
for some $f,g\in\Heisen$. If $f=0$, then we multiply both sides of~\eqref{CNx} by $CK$, and we obtain $KC^{N+1}x=g$. Here, $g$ cannot be zero for this will contradict $x\neq 0$. Thus, we use Corollary~\ref{magicCor} on $g$, and we are done. Suppose $f\neq 0$. If $g=0$, then we can again make use of Corollary~\ref{magicCor}, and we are done for this case. Thus, we now assume that both $f$ and $g$ are nonzero. Using Corollary~\ref{magicCor} on $f$, let $\psi$ be the composition of elements of $\Zdcm$ such that, for some nonzero scalar $c_1$ and positive integer $H$, we have $\psi(f)=c_1C^H$. Thus, $\psi(C^N x)=c_1C^H +\psi(Kg)$, and by~\eqref{dcmzero},
\begin{eqnarray}
(\dcmTH_H\circ\psi)(C^N x)=(\dcmTH_H\circ\psi)(Kg).\label{CNx2}
\end{eqnarray}
Note here that $H\in\Z$ and so we can write $(\dcmTH_H\circ\psi)$ as
\begin{eqnarray}
(\dcmTH_H\circ\psi) = \zeta_1\circ\zeta_2\circ\cdots\circ\zeta_r,\nonumber
\end{eqnarray}
for some $r\in\Zplus$ and some $\zeta_1,\zeta_2,\ldots,\zeta_r\in\Zdcm$. Consequently,
\begin{eqnarray}
(\dcmTH_H\circ\psi) (Kg)= \left(\zeta_1\circ\zeta_2\circ\cdots\circ\zeta_r\right)(Kg).\label{toQ0}
\end{eqnarray}
Using the identities in Proposition~\ref{KdcmProp} repeatedly on the right-hand side of~\eqref{toQ0}, there exists $N\in\Q$ and some $\xi_1,\xi_2,\ldots,\xi_r\in\nonZdcm$ such that
\begin{eqnarray}
(\dcmTH_H\circ\psi) (Kg)= q^N K\left(\xi_1\circ\xi_2\circ\cdots\circ\xi_r\right)(g).\label{toQ}
\end{eqnarray}
Since $g\neq 0$, by Proposition~\ref{nonzeroQdcmProp}, $g':=\left(\xi_1\circ\xi_2\circ\cdots\circ\xi_r\right)(g)$ is a~nonzero element of $\Heisen$. Multiplying both sides of~\eqref{toQ} by $q^{-N}KC$, we have $q^{-N}KC(\dcmTH_H\circ\psi) (Kg)=g'$, and so~\eqref{CNx2} becomes
\begin{eqnarray}
q^{-N}KC(\dcmTH_H\circ\psi)(C^N x)=g'.\label{CNx3}
\end{eqnarray}
We apply Corollary~\ref{magicCor} to the nonzero element $g'$ of $\Heisen$, and so there exists a~composition $\varphi$ of elements of $\Zdcm$, some nonzero scalar $c_2$, and some positive integer $M$ such that~\eqref{CNx3} becomes
\begin{eqnarray}
c_2^{-1}\varphi(q^{-N}KC(\dcmTH_H\circ\psi)(C^N x)) = C^M.\label{CNx4}
\end{eqnarray}
Since $x$ is an element of the ideal $\nonzeroId$, which is invariant under deformed commutator mappings, the left-hand side of~\eqref{CNx4} is an element of $\nonzeroId$. Therefore, $C^M \in\nonzeroId$.
\end{proof}

By Proposition~\ref{embeddingProp}, we immediately have:
\begin{corollary}\label{embedHCor} The subalgebra of $\Proto$ generated by $A,B$ is isomorphic to $\Heisen$.
\end{corollary}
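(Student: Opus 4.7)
The plan is to build an isomorphism $\Heisen \to \langle A,B\rangle$ (the subalgebra of $\Proto$ generated by $A$ and $B$) by routing through the faithful representation $\Psi_0$ from Corollary~\ref{isoCor} and the faithful representation $\Psi$ from Proposition~\ref{embeddingProp}.

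First I would check that the elements $A,B\in\Proto$ satisfy the defining relation $AB-qBA=1$ of $\Heisen$. Subtracting $q$ times \eqref{proto2} from \eqref{proto1} gives
\begin{eqnarray}
AB-qBA=\frac{(1-qC)-q(1-C)}{1-q}=\frac{1-q}{1-q}=1,\nonumber
\end{eqnarray}
so by the universal property of the presentation defining $\Heisen$, there exists a (necessarily surjective) homomorphism $\Phi:\Heisen\to\langle A,B\rangle\subseteq\Proto$ carrying the abstract generators of $\Heisen$ to $A$ and $B$. The task reduces to proving $\Phi$ is injective.

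Next I would compose with $\Psi_0:\Proto\to\Protoq$. Since $\Psi_0(A)=\anni$ and $\Psi_0(B)=\crea$, the composition $\Psi_0\circ\Phi:\Heisen\to\Protoq$ takes the abstract generators of $\Heisen$ to $\anni,\crea\in\repHeisen$, hence coincides with the homomorphism $\Psi:\Heisen\to\repHeisen$ discussed just before Proposition~\ref{embeddingProp}. By that proposition $\Psi$ is an isomorphism (in particular injective), so $\Psi_0\circ\Phi$ is injective. Since $\Psi_0$ is injective by Corollary~\ref{isoCor}, the injectivity of the composition forces $\Phi$ to be injective as well.

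Combined with the surjectivity noted above, this shows $\Phi$ is an isomorphism from $\Heisen$ onto the subalgebra of $\Proto$ generated by $A,B$, which is exactly the claim. There is no real obstacle here: both nontrivial inputs (the faithfulness of $\Psi_0$ and of $\Psi$) are already in hand, and the argument is just a chase through a commuting triangle of homomorphisms.
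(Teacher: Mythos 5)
Your proof is correct, but it takes a genuinely different route from the paper's. The paper proves the embedding abstractly, inside $\Proto$ itself: after noting that \eqref{proto1} and \eqref{proto2} give $AB-qBA=1$, it applies the criterion of \cite[Theorem 6.7]{Hel05}, which reduces injectivity of the induced map $\Heisen\to\Proto$ to the condition $C^n\neq 0$ for all $n$; this holds because \eqref{proto7} and \eqref{proto8} make $C$ invertible with inverse $K^2$, so $C^N=0$ would force $1=K^{2N}C^N=0$, contradicting the nontriviality of $\Proto$. You instead route through the concrete operator algebra: $\Psi_0\circ\Phi$ sends the generators of $\Heisen$ to $\anni,\crea$, hence coincides with the inverse of the isomorphism $\TheHom:\repHeisen\to\Heisen$ of Proposition~\ref{embeddingProp} (the paper's $\TheHom$ runs in the opposite direction from the map you call $\Psi$, but since it is an isomorphism this is immaterial), and injectivity of the composite forces injectivity of $\Phi$. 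Two remarks. First, you do not need the injectivity of $\Psi_0$ at all --- $g\circ f$ injective already implies $f$ injective --- only its existence as a homomorphism, which follows from \eqref{protoq1}--\eqref{protoq8}. Second, dropping the appeal to Corollary~\ref{isoCor} matters for more than economy: the paper establishes the embedding \emph{before} the faithfulness theorem precisely because the proof of Theorem~\ref{IdThm} already decomposes elements of $\Proto$ as $f+Kg$ with $f,g\in\Heisen$ and applies $\Heisen$-results to them, so leaning on Corollary~\ref{isoCor} risks a circular dependency; in your argument the fix is trivial, since only the existence of $\Psi_0$ is used in an essential way. As for what each approach buys: the paper's is self-contained and independent of the operator representation (it needs only that $\Proto\neq 0$, which the Diamond Lemma basis guarantees), while yours recycles Proposition~\ref{embeddingProp} and avoids a second invocation of the Hellstr\"om--Silvestrov criterion, at the cost of tying the result to the representation on $\ell_2$ and the choice $q=e^{-2\freq}$.
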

Hence, we identify $\anni$ as $A$ and $\crea$ as $B$. All the properties of $\Heisen$ are then satisfied in the subalgebra of $\Proto$ generated by $A,B$.

\begin{remark}\label{longRem} We rewrite in here the relations~\eqref{combase1} to~\eqref{combase4} in terms of $A$, $B$, and $C=\LieAB$:

\begin{eqnarray}
\LieABiso^{k+2} & = & \frac{-q^k (1-q)}{1-q^{k+1}}\sum_{i=0}^k \frac{\left(\left(\ad \creaIso\right)\circ\left(-\ad \LieABiso\right)^{k}\circ(\ad A)\right)\left(C\right)}{(q-1)^{1+i}},\label{combaseIso1}\\
\LieABiso^{k+1}\anniIso^l & = & -\frac{\left(\left(-\ad \LieABiso\right)^k \circ\left(-\ad \anniIso\right)^{l+1}\right)\left(\creaIso\right)}{(1-q)^l (q^l -1)^k}, \label{combaseIso2}\\
\LieABiso^{k+1}\pcreaIso^l & = & q^{l(k+1)}\frac{\left(\left(\ad \creaIso\right)^{l-1}\circ\left(\ad \LieABiso\right)^{k+1}\right)\left(B\right)}{(q-1)^{k+1}(1-q^{k+1})^{l-1}},\label{combaseIso4}
\end{eqnarray}
where $k\in\N$ and $l\in\Zplus$. The relation~\eqref{combaseIso4} is~\eqref{combase4} with the adjustment from $B^l C^k$ to $C^k B^l$ as discussed in Section~\ref{prelSec}. All such relations~\eqref{combaseIso1} to~\eqref{combaseIso4} hold in $\Proto$. Additionally, because of the embedding of $\Heisen$ in $\Proto$ as given in Corollary~\ref{embedHCor}, by Remark~\ref{LieBasisRem}, the elements
\begin{eqnarray}
C^k, & & \quad\quad\quad\quad (k\in\Zplus),\label{ctypeLieH}\\
A,\quad C^k A^l, & & \quad\quad\quad\quad (k,l\in\Zplus),\label{ctypeA0H}\\
B,\quad C^k B^{l}, & & \quad\quad\quad\quad (k,l\in\Zplus),\label{ctypeB0H}
\end{eqnarray}
form a~basis for the Lie subalgebra of $\Proto$ generated by $A,B$ (or by $A,B,C$), while any (finite) linear combination or
\begin{eqnarray}
1,\quad A^l,\quad B^l,\quad\quad\quad (l\in\N\backslash\{0,1\}),\label{nonLiebasisH}
\end{eqnarray}
is not a~Lie polynomial in in $A,B$ (nor in $A,B,C$).

\end{remark}

\section{Commutator algebra for the generators \texorpdfstring{$K,A,B,C$}{K,A,B,C} of \texorpdfstring{$\Proto$}{P}}\label{commSec}

In this section we find a description for the commutator algebra or Lie subalgebra of $\Proto$ generated by $K,A,B,C$. 
In the next two lemmas, we exhibit some important properties of the basis elements~\eqref{protoBasis}.

\begin{lemma}\label{LieBasisLem} Every basis element in~\eqref{protoBasis}, except the multiplicative identity $\algI$, is a~Lie polynomial in $K,A,B,C$.
\end{lemma}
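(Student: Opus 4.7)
The plan is to treat the basis elements in \eqref{protoBasis} by cases, leveraging the embedding $\Heisen\subseteq\Proto$ furnished by Corollary~\ref{embedHCor}. The easy ingredients are already in hand: $A$, $B$, and $K$ are generators, $C=[A,B]$ is a commutator, and by Remark~\ref{longRem} each Heisenberg basis element of the form $C^k$ ($k\geq 1$), $A$, $C^kA^l$ ($k,l\geq 1$), $B$, or $C^kB^l$ ($k,l\geq 1$) is already a Lie polynomial in $A,B$. The substantive content of the lemma is therefore (i) to promote the Heisenberg elements $A^l,B^l$ for $l\geq 2$ --- which by Remark~\ref{LieBasisRem} are \emph{not} Lie polynomials in $A,B$ --- into Lie polynomials of the enlarged generating set $\{K,A,B,C\}$, and (ii) to handle every basis element carrying a positive power of $K$.

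The central computational device is the observation that, by relations \eqref{proto5}--\eqref{proto7}, for any monomial $Y\in\Heisen$ of net $B$-minus-$A$ degree $d$, one has $[K,Y]=(1-q^{d/2})KY$, whose scalar is nonzero whenever $d\neq 0$ thanks to Assumption~\ref{nontorsionAs}. Concretely, starting from the immediate identity $[A,K]=(q^{-1/2}-1)KA$ (which already exhibits $KA$ as a Lie polynomial), I would induct on $l$ using $[KA^{l-1},A]=(1-q^{-1/2})KA^l$ to conclude that $KA^l$ is a Lie polynomial for every $l\geq 1$, and symmetrically for $KB^l$.

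The decisive step for (i) is to exploit the invertibility of $K$ in $\Proto$: since \eqref{proto8} gives $K^2C=\algI$, one has $K^{-1}=KC$. A direct computation using $AK=q^{-1/2}KA$, $CA^l=q^{-l}A^lC$, and $K^2C=\algI$ yields
\begin{eqnarray}
[KC,\,KA^l]\;=\;(1-q^{l/2})A^l,\nonumber
\end{eqnarray}
whose scalar is nonzero because $q$ is not a root of unity, so $A^l$ is a Lie polynomial as soon as $KC$ and $KA^l$ are. The symmetric computation handles $B^l$. For (ii), I would then produce $KC$ from $[A,KB]$ (which a direct expansion shows equals a nonzero combination of $K$ and $KC$) and, by induction on $k$, produce $KC^k$ from $[A,KC^{k-1}B]$ using the reduction $K^hC=K^{h-2}$ forced by \eqref{proto8}. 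Any pure power $K^h$ ($h\geq 2$) arises from $[A,K^hB]$ after the same reduction rewrites its output as a nonzero combination of $K^h$ and $K^{h-2}$, with $K^hB$ itself built inductively from $KB$ via $[K,K^{h-1}B]=(1-q^{1/2})K^hB$. Every remaining mixed basis element $K^hC^kA^l$ or $K^hC^kB^l$ is finally obtained by one more application of the $[K,Y]$-type trick to one of the Lie polynomials already produced.

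The main obstacle is step (i): $A^l,B^l$ ($l\geq 2$) genuinely lie outside $\repHeisenLie$, so the whole argument rests on producing a commutator in $\Proto$ whose output is a pure power of $A$ or $B$ carrying no residual $K$ or $C$ factor. The identity $[KC,KA^l]=(1-q^{l/2})A^l$ does exactly this, and it relies crucially on the single relation $K^2C=\algI$ that genuinely distinguishes $\Proto$ from $\Heisen$. Once that identity is in place, the rest of the proof is careful but routine bookkeeping, with Assumption~\ref{nontorsionAs} guaranteeing that every denominator encountered along the way is nonzero.
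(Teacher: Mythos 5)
Your proposal is correct, and its overall architecture mirrors the paper's proof: both segregate the basis \eqref{protoBasis} into the pure Heisenberg elements (already Lie polynomials in $A,B$ by \eqref{combase1}--\eqref{combase4}), the elements $KC^kA^l$, $KC^kB^l$ recovered from $\lbrack C^kA^l,K\rbrack$ and $\lbrack K,C^kB^l\rbrack$ exactly as in \eqref{basistype2A}--\eqref{basistype2B}, and the remaining families $K^hA^l$, $K^hB^l$, $K^h$, $KC^l$ treated inductively. Where you genuinely diverge is at the decisive step, the promotion of $A^l$ and $B^l$ ($l\geq 2$) into $\LieProto$. The paper computes $\lbrack KB,KA^{l+1}\rbrack$, substitutes \eqref{prel1}--\eqref{prel2}, and uses $K^2C=\algI$ to arrive at \eqref{LieAl}, which expresses $A^l$ as a commutator plus a correction term $K^2A^l$ that must already be known to lie in $\LieProto$ via \eqref{hlimited1}. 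Your identity $\lbrack KC,KA^l\rbrack=(1-q^{l/2})A^l$ is correct (indeed $KC\cdot KA^l=K^2CA^l=A^l$ while $KA^l\cdot KC=q^{-l/2}K^2A^lC=q^{l/2}A^l$) and is cleaner: it produces $A^l$ outright with no residual term, at the modest cost of first establishing $KC\in\LieProto$ from $\lbrack A,KB\rbrack$; your dependency order ($KB$, then $KC$, then $KA^l$, then $A^l$) is free of circularity. One point you must make explicit when writing this up: for the pure powers, your claim that $\lbrack A,K^hB\rbrack$ reduces to a \emph{nonzero} combination of $K^h$ and $K^{h-2}$ fails precisely at the base case $h=2$, where the coefficient of $K^{h-2}=\algI$ is $\frac{1-q^{1-h/2}}{1-q}=0$; this vanishing is not incidental but essential, since $\algI\notin\LieProto$ and the induction could not close otherwise, and it is exactly how the paper's relation \eqref{LieKh} also survives its own base case $h=0$. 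With that caveat spelled out, your argument is complete.
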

\begin{proof}
For convenience in later computations, we use the notation
\begin{eqnarray}
p:=q^{\frac{1}{2}}.
\end{eqnarray}
Thus, the defining relations of $\Proto$ become
\begin{eqnarray}
AB & = & \frac{1-p^2 C}{1-p^2},\label{prel1}\\
BA & = & \frac{1-C}{1-p^2},\label{prel2}\\
AC & = & p^2 CA,\label{prel3}\\
BC & = & p^{-2}CB,\label{prel4}\\
AK & = & p^{-1}KA,\label{prel5}\\
BK & = & pKB,\label{prel6}\\
CK & = & KC,\label{prel7}\\
K^2 C & = & 1.\label{prel8}
\end{eqnarray}
By some routine computations and induction, the relations~\eqref{prel3} to~\eqref{prel6} can be generalized into:
\begin{eqnarray}
A^l K^h & = & p^{-hl}K^h A^l,\label{reorderAK}\\
B^l K^h & = & p^{hl} K^h B^l,\label{reorderBK}\\
A^l C^k & = & p^{2kl} C^k A^l,\label{reorderAC}\\
B^l C^k & = & p^{-2kl} C^k B^l,\label{reorderBC}
\end{eqnarray}
which hold for any $h,k,l\in\N$. Our strategy is to segregate the basis elements $U\neq\algI$ among~\eqref{protoBasis} into the following:
\begin{eqnarray}
C^k A^l,\quad C^k B^l,\quad C^k, & \quad\quad (k,l\in\Zplus),\label{LieH}\\
KC^k A^l,\quad KC^k B^l, & \quad\quad (k,l\in\Zplus)\label{LieHreorder},\\
K^h A^l,\quad K^h B^l,\quad K^l,\quad KC^l, & \quad\quad\quad\quad (h\in\N,\ l\in\Zplus).\label{compuBasis}
\end{eqnarray}
By~\eqref{combase1} to~\eqref{combase4}, the basis elements~\eqref{LieH} are all elements of $\LieProto$. Using the reordering formulas~\eqref{reorderAK} and~\eqref{reorderBK}, and the commutativity of $K$ with $C$ in some routine computations, we have the relations
\begin{eqnarray}
KC^k A^l = \frac{p^l}{1-p^l}\lbrak C^k A^l,K\rbrak,\label{basistype2A}\\
KC^k B^l = \frac{1}{1-p^l}\lbrak K,C^k B^l \rbrak,\label{basistype2B}
\end{eqnarray}
which hold for any $k,l\in\Zplus$. But since $C^k A^l,\ K,\ C^k B^l \in\LieProto$ for any $k,l$, the relations~\eqref{basistype2A} and~\eqref{basistype2B} imply that the basis elements~\eqref{LieHreorder} are all in $\LieProto$. We now sketch the computations necessary to prove that basis elements~\eqref{compuBasis} are in $\LieProto$. First, we consider those of the form $K^h A^l,K^h B^l$ for the case $h\in\Zplus$. Using induction and the reordering formulas~\eqref{reorderAK} and~\eqref{reorderBK}, the relations
\begin{eqnarray}
K^h A^l & = & \frac{p^{hl}}{(1-p)^l (1-p^l)^{h-1}}\left(\left(-\ad K\right)^{h-1}\circ\left(\ad A\right)^l \right)(K),\quad (h,l\in\Zplus),\label{hlimited1}\\
K^h B^l & = & \frac{1}{(1-p)^l (1-p^l)^{h-1}}\left(\left(\ad K\right)^{h-1}\circ\left(-\ad A\right)^l \right)(K),\quad (h,l\in\Zplus),\label{hlimited2}
\end{eqnarray}
hold in $\Proto$. In view of Remark~\ref{adRem}, the relations~\eqref{hlimited1},~\eqref{hlimited2} imply that $K^h A^l,K^h B^l \in\LieProto$ whenever $h,l\in\Zplus$.

We are missing the case $h=0$ in the relations~\eqref{hlimited1} and~\eqref{hlimited2}, and so we turn our attention to some other types of Lie polynomials in $K,A,B,C$ on which we can use the reordering formulas~\eqref{reorderAK} to~\eqref{reorderBC} and also the defining relations of $\Proto$ in order to show that $A^l,B^l \in\LieProto$. Using the reordering formulas~\eqref{reorderAK} and~\eqref{reorderBK}, we have
\begin{eqnarray}
p^{l+1}\lbrak KB,KA^{l+1}\rbrak = p^{l+2}K^2 (BA)A^l -K^2 A^l (AB).\label{technique1}
\end{eqnarray}
Using~\eqref{prel1},~\eqref{prel2}, we replace $BA$ and $AB$ in the right-hand side of~\eqref{technique1}. This gives us
\begin{eqnarray}
p^{l+1}(1-p^2)\lbrak KB,KA^{l+1}\rbrak = K^2 (p^{l+2}(1-C)A^l -A^l (1-p^2 C)).\label{technique2}
\end{eqnarray}
By further manipulations that also make use of the relation~\eqref{prel8}, we turn~\eqref{technique2} into
\begin{eqnarray}
A^l = \frac{1-p^2}{p(1-p^l)}\lbrak KA^{l+1},KB\rbrak-\frac{1-p^{l+2}}{p^{l+2}(1-p^l)}K^2 A^l,\quad\quad\quad(l\in\Zplus).\label{LieAl}
\end{eqnarray}
We have established previously that $KA^{l+1},\ KB,\ K^2 A^l \in\LieProto$, and so~\eqref{LieAl} asserts that $A^l \in\LieProto$. By a~computational pattern similar to that done in~\eqref{technique1} to~\eqref{LieAl}, the relations
\begin{eqnarray}
B^l & = & \frac{1-p^2}{1-p^l}p^{l-1}\lbrak KA,KB^{l+1}\rbrak-\frac{1-p^{l+2}}{1-p^l}p^{l-2}K^2 B^l,\quad\quad\quad(l\in\Zplus),\label{LieBl}\\
K^{h+2} & = & \frac{1-p^2}{1-p^{h+2}}\lbrak K^{h+2}A,B\rbrak+\frac{1-p^{h}}{1-p^{h+2}}p^{2}K^h,\quad\quad\quad\quad\quad\quad(h\in\N),\label{LieKh}\\
KC^l & = & \frac{1-p^2}{1-p^{2l}}p^{2l-2}\lbrak KC^{l-1}A,B\rbrak+\frac{1-p^{2l-2}}{1-p^{2l}}KC^{l-1},\quad\quad\quad\ \ (l\in\Zplus),\label{LieKCl}
\end{eqnarray}
can be shown to hold in $\Proto$ by routine calculations. As established earlier, $KA$, $KB^{l+1}$, $K^2 B^l \in\LieProto$. Then by~\eqref{LieBl}, $B^l \in\LieProto$.

So far, the relations~\eqref{hlimited1},\eqref{hlimited2},\eqref{LieAl},\eqref{LieBl} assert that all basis elements in~\eqref{compuBasis} of the form $K^h A^l,\ K^h B^l$ (with $h\in\N$, $l\in\Zplus$) are in $\LieProto$. For those of the form $K^l$, we use~\eqref{LieKh}. If $h=0$ in~\eqref{LieKh}, then we find a~relation asserting that $K^2$ is a~scalar multiple of $\lbrak K^2 A,B\rbrak$, where $K^2 A\in\LieProto$ as previously established. Thus, $K^2 \in\LieProto$. We can use the condition $K,K^2 \in\LieProto$ and~\eqref{LieKh} in an inductive argument which proves that $K^l \in\LieProto$ for all $l\in\Zplus$. By a~similar argument,~\eqref{LieKCl} can be used to show that $KC^l \in\LieProto$ for any $l\in\Zplus$. This completes all the cases needed in the proof.
\end{proof}

\begin{lemma}\label{closureLem} If $U,V$ are any two basis elements from~\eqref{protoBasis} neither of which is the multiplicative identity $\algI$, and if $\lbrak U,V\rbrak=\sum_{i=1}^t c_tW_t$ for some nonzero scalars $c_t$ and some basis elements $W_t$ from~\eqref{protoBasis}, then $W_t\neq 1$ for any $t$.
\end{lemma}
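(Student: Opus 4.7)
The plan is to exploit the $\Z$-gradation of $\Proto$ in which $\deg A = -1$, $\deg B = +1$, and $\deg K = \deg C = 0$. Every defining relation \eqref{proto1}--\eqref{proto8} is homogeneous under this assignment, so the gradation is well-defined and every basis element from \eqref{protoBasis} lies in a single component $\gradsub_n$; in particular $\algI \in \gradsub_0$, so for $U \in \gradsub_m$ and $V \in \gradsub_n$ we have $\lbrak U,V\rbrak \in \gradsub_{m+n}$, and the problem reduces to a case analysis on $(m,n)$.

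I would split the argument into three cases. When $m + n \neq 0$, the commutator lies in $\gradsub_{m+n}$ and cannot contribute an $\algI$ term. When $m = n = 0$, each of $U,V$ is a basis element of $\gradsub_0$ distinct from $\algI$, hence a monomial in the mutually commuting generators $K$ and $C$, so $\lbrak U,V\rbrak = 0$. The delicate case is $m = -n \neq 0$, where, WLOG $m > 0$, I write $U = K^{h_1} C^{k_1} B^m$ and $V = K^{h_2} C^{k_2} A^m$; using the reordering formulas \eqref{reorderAK}--\eqref{reorderBC} together with $CK = KC$, a direct computation yields
\[
\lbrak U, V\rbrak \;=\; K^{h_1+h_2} C^{k_1+k_2}\bigl(\alpha\, B^m A^m - \beta\, A^m B^m\bigr),
\]
with $\alpha = q^{m(h_2 - 2k_2)/2}$ and $\beta = q^{m(-h_1 + 2k_1)/2}$.

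The heart of the argument is the following identity: writing $A^m B^m = \sum_j a_j C^j$ and $B^m A^m = \sum_j b_j C^j$ in $\F[C]$, one has $b_j = q^{-mj} a_j$. This is a consequence of the closed forms
\[
A^m B^m \;=\; \frac{\prod_{i=1}^{m}(1-q^i C)}{(1-q)^m}, \qquad B^m A^m \;=\; \frac{\prod_{i=0}^{m-1}(1-q^{-i}C)}{(1-q)^m},
\]
which I would establish by a short induction on $m$, using \eqref{proto1}, \eqref{proto2}, and the intertwining relations $A\,f(C) = f(qC)\,A$, $B\,f(C) = f(q^{-1}C)\,B$ (themselves immediate from \eqref{proto3}, \eqref{proto4}). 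The substitution $C \mapsto q^{-m}C$ carries the first formula to the second, which gives the stated identity for $b_j$. In the basis expansion of $\lbrak U, V\rbrak$, the only monomial $K^{h_1+h_2} C^{k_1+k_2+j}$ that reduces via $K^2 C = \algI$ to $\algI$ is the one with $j = j^* := (h_1+h_2)/2 - (k_1+k_2)$ (assuming this lies in $\N \cap \{0,\ldots,m\}$), and its coefficient $\alpha b_{j^*} - \beta a_{j^*} = a_{j^*}\bigl(\alpha q^{-m j^*} - \beta\bigr)$ vanishes because the exponent of $q$ in $\alpha q^{-mj^*}$ simplifies, using $2j^* = h_1 + h_2 - 2(k_1+k_2)$, to $m(-h_1 + 2k_1)/2$, which is precisely the exponent in $\beta$.

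The main obstacle is establishing the identity $b_j = q^{-mj} a_j$; without this precise quantitative balance between the expansions of $A^m B^m$ and $B^m A^m$, the cancellation in the subcase $m + n = 0$ would fail and a nonzero coefficient of $\algI$ could survive whenever $j^*$ falls in the admissible range. Once the closed-form expressions are in hand, the remaining work is essentially bookkeeping: the gradation excludes the $\algI$ term in the first two cases, while the reduction $K^2C = \algI$, combined with the cancellation identity, handles the third.
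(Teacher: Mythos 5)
Your argument is correct, but it takes a genuinely different route from the paper's. The paper splits into two cases: when $h_1=h_2=0$ it invokes Remark~\ref{longRem} (the imported characterization of the Lie subalgebra of $\Heisen$ generated by $A,B$, whose basis excludes $\algI$), and when a positive power of $K$ is present it factors $\lbrak U,V\rbrak$ as $K^N$ times an element built from \eqref{ctypeLieH}--\eqref{ctypeB0H} and argues the product cannot be $\algI$. You instead install a $\Z$-gradation on all of $\Proto$ (with $K$ and $C$ in degree $0$), which disposes of every case except the balanced one $U=K^{h_1}C^{k_1}B^m$, $V=K^{h_2}C^{k_2}A^m$, and there you prove an explicit cancellation: the closed forms for $A^mB^m$ and $B^mA^m$ as polynomials in $C$ give $b_j=q^{-mj}a_j$, which forces the coefficient of the unique monomial $K^{h_1+h_2}C^{k_1+k_2+j^*}$ that could collapse to $\algI$ under $K^2C=\algI$ to vanish (I verified the exponent bookkeeping: $\alpha q^{-mj^*}=\beta$ does hold, and the product formulas are correct and follow by the induction you describe). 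What your approach buys: it is self-contained, since the subcase $h_1=h_2=0$ is just $j^*=0$ of your cancellation and needs no appeal to the results of \cite{Can19a}; and it treats head-on the delicate possibility that $K^N$ times a power of $C$ reduces to $\algI$ (e.g.\ $K^{2j}C^{j}=\algI$), which the paper's factor-out-$K^N$ step passes over rather quickly. What the paper's version buys is brevity, by leaning on the already-established structure of $\repHeisenLie$. One small point to make explicit when you write this up: after reduction by $K^2C=\algI$, distinct exponents $j$ yield distinct normal-form basis elements (the integer $h-2k$ is invariant under the reduction and determines the normal form of a monomial in $K,C$), so no two terms of your expansion can merge and only the $j=j^*$ term could ever contribute to $\algI$; with that observation recorded, the proof is complete.
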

\begin{proof}
Since $\lbrack U,V\rbrack =0$ if one of $U$, $V$ is $1$, we further assume that $U$ and $V$ are of the form $K^h C^k B^l$ or $K^h C^k A^l$ for some $h,k\in\N$ and some $l\in\Zplus$. If $U,V\in\Heisen$ (i.e., if $h=0$), then by Remark~\ref{longRem}, we find that it is not possible for $W_t$ to be $1$ for any $t$. Thus, we further assume the exponent of $K$ in at least one of $U$, $V$ is positive. 

It is routine to show, using~\eqref{reorderAK},~\eqref{reorderBK}, that under all such remaining cases, either $\lbrack U,V\rbrack =K^N \lbrack U',V'\rbrack$ or $\lbrack U,V\rbrack =K^N W'$ for some positive integer $N$, where $U'$, $V'$, $W'$ are among~\eqref{ctypeLieH} to~\eqref{ctypeB0H}. If $\lbrack U,V\rbrack =K^N \lbrack U',V'\rbrack$, then by Remark~\ref{longRem}, $\lbrack U',V'\rbrack$ is also one of~\eqref{ctypeLieH} to~\eqref{ctypeB0H}, but $K^N$ multiplied by any one of~\eqref{ctypeLieH} to~\eqref{ctypeB0H} is not $1$. Neither is $K^N W'$ for the last remaining case.
\end{proof}

\begin{theorem}\label{LieThm} The Lie subalgebra $\LieProto$ of $\Proto$ generated by $K,A,B,C$ is precisely the span of all the basis elements~\eqref{protoBasis} except the multiplicative identity $\algI$. i.e., The algebra $\Proto$ can be decomposed as the direct sum
\begin{eqnarray}
\Proto = \C\algI\oplus\LieProto.
\end{eqnarray}
\end{theorem}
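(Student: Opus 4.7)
The plan is to establish the set equality $\LieProto = L$, where $L$ denotes the linear span of all basis elements from \eqref{protoBasis} other than $\algI$, and then read off the direct sum decomposition as an immediate corollary of the fact that \eqref{protoBasis} is a basis of $\Proto$.

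For the inclusion $L \subseteq \LieProto$, I would invoke Lemma~\ref{LieBasisLem} directly: every non-identity basis element in \eqref{protoBasis} has been exhibited there as an explicit Lie polynomial in $K, A, B, C$, and since $\LieProto$ is a vector subspace, taking finite linear combinations keeps us inside it.

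For the reverse inclusion $\LieProto \subseteq L$, the strategy is to show that $L$ is itself a Lie subalgebra of $\Proto$. Once this is established, the minimality of $\LieProto$ as a Lie subalgebra containing $K, A, B, C$ forces $\LieProto \subseteq L$, because $K, A, B, C$ are themselves among the non-identity basis elements spanning $L$. To verify closure under the bracket, I would take arbitrary $X, Y \in L$, expand each as a finite linear combination of non-identity basis elements from \eqref{protoBasis}, and use bilinearity of the commutator to reduce the question to brackets $\lbrak U,V\rbrak$ with $U,V$ both non-identity basis elements. Lemma~\ref{closureLem} is tailored exactly to this situation: it guarantees that the expansion of $\lbrak U,V\rbrak$ in the basis \eqref{protoBasis} contains no $\algI$-component, so $\lbrak U,V\rbrak \in L$, and by linearity $\lbrak X,Y\rbrak \in L$.

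With $\LieProto = L$ in hand, the decomposition $\Proto = \F\algI \oplus \LieProto$ follows because the basis \eqref{protoBasis} partitions as $\{\algI\}$ together with the basis of $L$, so the corresponding spans are complementary subspaces whose sum is all of $\Proto$. All the genuine computational content has already been absorbed into Lemmas~\ref{LieBasisLem} and~\ref{closureLem}, so I expect the proof of the theorem itself to be a short formal assembly rather than to present any further obstacle; the only care needed is to state explicitly how bilinearity of the bracket propagates the conclusion of Lemma~\ref{closureLem} from pairs of basis elements to arbitrary elements of $L$.
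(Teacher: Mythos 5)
Your proposal is correct and matches the paper's own proof essentially verbatim: both show the span $\mathcal{V}$ of the non-identity basis elements is contained in $\LieProto$ via Lemma~\ref{LieBasisLem}, and obtain the reverse inclusion by using Lemma~\ref{closureLem} to see that $\mathcal{V}$ is a Lie subalgebra containing $K,A,B,C$ and then invoking minimality of $\LieProto$. Your explicit remark about bilinearity propagating the no-$\algI$-component conclusion from basis pairs to arbitrary elements is a detail the paper leaves implicit, but it is the same argument.
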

\begin{proof}
Let $\mathcal{V}$ be the span of all basis elements~\eqref{protoBasis} except $\algI$. Thus, $\mathcal{V}$ contains the generators $K,A,B,C$ of $\LieProto$. Lemma~\ref{closureLem} asserts that $\mathcal{V}$ is a~Lie subalgebra of $\Proto$, while Lemma~\ref{LieBasisLem} implies $\mathcal{V}\subseteq\LieProto$. By the minimality of $\LieProto$ among all Lie subalgebras of $\Proto$ that contain $K,A,B,C$, we have $\mathcal{V}=\LieProto$.
\end{proof}

\section{On a~\texorpdfstring{$q$}{q}-oscillator representation of the Askey-Wilson algebra}

We now discuss how our algebraic results are related to a~Lie algebra structure in some representation of an Askey-Wilson algebra. In particular, we shall consider the following.
\begin{definition}
The Askey-Wilson algebra, denoted by $\AW$, is the unital associative algebra over $\C$ with generators $\genzero$, $\genone$, $\gentwo$ and relations
\begin{eqnarray}
e^\freq\genzero\genone-e^{-\freq}\genone\genzero & = & \gentwo,\label{AW1}\\
e^\freq\gentwo\genzero-e^{-\freq}\genzero\gentwo & = & b\genzero+c_1\genone+d_1,\label{AW2}\\
e^\freq\genone\gentwo-e^{-\freq}\gentwo\genone & = & b\genone+c_0\genzero+d_0,\label{AW3}
\end{eqnarray}
where $\freq\in\R\backslash\{0\}$. The scalars $b,c_0,c_1,d_0,d_1\in\C$ are called the \emph{structure constants} of $\AW$.
\end{definition}
According to~\cite[pp.~1155--1156]{Zhe91}, if the conditions
\begin{eqnarray}
b=c_1=d_0=d_1=0,\qquad c_0=e^{2\freq}-1,\label{AWstruct}
\end{eqnarray}
are imposed on the structure constants of $\AW$, then the algebra $\AW$ has a~$q$-oscillator representation given by
\begin{eqnarray}
\genzero & = & e^{\freq\Num},\label{K0def}\\
\genone & = & \anni+\crea.\label{K1def}
\end{eqnarray}
In this representation, $\genone$ is said to be the $\freq$-analogue of the position operator.
\begin{definition}
Define $\AWq$ as the subalgebra of $\Protoq$ generated by the representations of $\genzero$ and $\genone$ given in~\eqref{K0def},\eqref{K1def}, with $q=e^{-2\freq}$.
\end{definition}
Thus, the choice here of $q$ is nonzero and not a~root of unity, and the results of the previous sections apply. Recall our notation $p=q^{\frac{1}{2}}$. By $q=e^{-2\freq}$ and~\eqref{AWstruct}, the relations~\eqref{AW1}-\eqref{AW3} can be rewritten as
\begin{eqnarray}
p^{-1}\genzero\genone-p\genone\genzero & = & \gentwo,\label{K2def}\\
p^{-1}\gentwo\genzero-p\genzero\gentwo & = & 0,\nonumber\\
p^{-1}\genone\gentwo-p\gentwo\genone & = & \frac{1-p^2}{p^2}\genzero,\nonumber
\end{eqnarray}
By~\eqref{K2def}, the algebra $\AWq$ contains the representation of $K_2\in\AW$ as an operator on the $q$-oscillator. By Theorem~\ref{IdThm}, we can represent elements of $\AWq$ in terms of the generators $K,A,B,C$ of $\Proto$. Thus,~\eqref{K0def},~\eqref{K1def} become
\begin{eqnarray}
\genzero & = & K,\nonumber\\
\genone & = & A+B.\nonumber
\end{eqnarray}
As for the generator $\gentwo$, we use~\eqref{K2def} and the reordering formulas~\eqref{proto5},~\eqref{proto6}. By some routine calculations, we have
\begin{eqnarray}
\gentwo = p^{-1}(1-p)KA + p^{-1}(1-p^3)KB.\label{gentwoREL}
\end{eqnarray}
Our next goal is to determine the relationship between the algebras $\Proto$ and $\AWq$, and as shall be seen in the computations and proofs that we will present, the Lie polynomials in the generators of these algebras play an important role.
\begin{lemma}\label{AWtoPLem} The Lie subalgebra $\LieAWq$ of $\AWq$ generated by $\genzero,\genone,\gentwo$ is a~Lie subalgebra of $\LieProto$.
\end{lemma}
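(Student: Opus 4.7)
The plan is to reduce the containment $\LieAWq\subseteq\LieProto$ to checking that the three Lie-algebra generators $\genzero,\genone,\gentwo$ of $\AWq$ already lie in $\LieProto$, since then the minimality of $\LieAWq$ as a Lie subalgebra containing these generators forces the desired containment. Crucially, the Lie bracket on $\AWq$ is just the restriction of the Lie bracket on $\Proto$ (both coming from the associative product), so any Lie polynomial in $\genzero,\genone,\gentwo$ evaluated in $\AWq$ equals the same Lie polynomial evaluated in $\Proto$.

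First I would unpack the generators using the identifications already established: $\genzero=K$, $\genone=A+B$, and, via \eqref{gentwoREL}, $\gentwo=p^{-1}(1-p)KA+p^{-1}(1-p^3)KB$. The membership $\genzero\in\LieProto$ is immediate because $K$ is one of the generators of $\LieProto$. The membership $\genone\in\LieProto$ follows because $A$ and $B$ are generators of $\LieProto$ and $\LieProto$ is closed under addition.

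For $\gentwo$ the key observation is that the two terms $KA$ and $KB$ are basis elements of $\Proto$ from the list \eqref{protoBasis} (they correspond to $h=1$, $k=0$, $l=1$ with the $A$- and $B$-types respectively), and neither of them is the multiplicative identity $\algI$. Therefore Lemma~\ref{LieBasisLem} applies and gives $KA,KB\in\LieProto$; explicitly, these are instances of the Lie-polynomial expressions \eqref{basistype2A}--\eqref{basistype2B} for $k=0$, $l=1$. Taking the appropriate linear combination yields $\gentwo\in\LieProto$.

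Having placed $\genzero,\genone,\gentwo$ inside $\LieProto$, the conclusion is formal: $\LieProto$ is a Lie subalgebra of $\Proto$ that contains the three generators of $\LieAWq$, hence contains every Lie polynomial in them, hence contains $\LieAWq$. I do not anticipate a serious obstacle; the only conceptual point to be careful about is that the Lie structure on $\AWq$ used to form $\LieAWq$ is inherited from that on $\Proto$ (so that ``Lie polynomial in $\AWq$'' and ``Lie polynomial in $\Proto$'' are the same object), which is clear from the definition of $\AWq$ as a subalgebra of $\Protoq\cong\Proto$.
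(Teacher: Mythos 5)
Your proposal is correct and follows essentially the same route as the paper: the paper likewise reduces the claim to exhibiting $\genzero=K$, $\genone=A+B$, and $\gentwo$ as Lie polynomials in $K,A,B$ (writing $KA=\frac{p\lbrak A,K\rbrak}{1-p}$, $KB=\frac{\lbrak K,B\rbrak}{1-p}$ so that $\gentwo=\lbrak A-p^{-1}(1+p+p^2)B,K\rbrak$) and then concludes by the generating property of $\LieAWq$. The only cosmetic slip is your citation of \eqref{basistype2A}--\eqref{basistype2B} at $k=0$, which lies outside their stated range $k\in\Zplus$; the instance you need is \eqref{hlimited1}--\eqref{hlimited2} with $h=l=1$ (or Lemma~\ref{LieBasisLem} directly), which gives the same formulas.
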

\begin{proof}
Using~\eqref{prel5},~\eqref{prel6}, and by some routine calculations, we obtain
\begin{eqnarray}
KA & = & \frac{p\lbrak A,K\rbrak}{1-p},\\
KB & = & \frac{\lbrak K,B\rbrak}{1-p}. \label{BKLie}
\end{eqnarray}
From~\eqref{gentwoREL}-\eqref{BKLie}, we have enough information to express $\gentwo$ as a~Lie polynomial in $K,A,B$ by routine computations. To summarize all relevant equations, we have
\begin{eqnarray}
\genzero & = & K,\label{KLie0}\\
\genone & = & A+B,\label{KLie1}\\
\gentwo & = & \lbrak A-p^{-1}(1+p+p^2)B,K\rbrak.\label{KLie2}
\end{eqnarray}
An immediate consequence of~\eqref{KLie0}-\eqref{KLie2} is that every Lie polynomial in $\genzero,\genone,\gentwo$ is also a~Lie polynomial in $K,A,B$, which are enough to generate $\LieProto$ since $C=\lbrak A,B\rbrak$.
\end{proof}
The most natural continuation is to consider the converse: is it possible to write any Lie polynomial in the generators of $\Proto$ as a~Lie polynomial in the generators of $\AWq$? We now show that we have an answer in the affirmative.
\begin{lemma}\label{PtoAWLem} The generators $K,A,B$ of the Lie algebra $\LieProto$ are Lie polynomials in $\genzero,\genone,\gentwo$.
\end{lemma}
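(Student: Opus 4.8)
The plan is to reconstruct $K,A,B$ from the two ``half-pieces'' $KA$ and $KB$. Since $\genzero=K$, the generator $K$ already lies in $\LieAWq$. Next, \eqref{KLie0}--\eqref{KLie2} together with $\lbrak\genone,\genzero\rbrak=(p^{-1}-1)KA+(p-1)KB$ exhibit $\gentwo$ and $\lbrak\genone,\genzero\rbrak$ as two linear combinations of $KA$ and $KB$; because $q$ (hence $p$) is not a root of unity the resulting $2\times2$ coefficient matrix is invertible, so $KA,KB\in\LieAWq$, and then $K^2=p\,\lbrak KA,KB\rbrak\in\LieAWq$ by \eqref{prel8}.

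The decisive observation is that $A$ and $B$ are recovered from $KA,KB$ as soon as $KC$ is available. Indeed $K^2C=\algI$ makes $KC=K^{-1}$, and the reordering relations \eqref{prel3}--\eqref{prel6} give the clean identities $\lbrak KA,KC\rbrak=(p-1)A$ and $\lbrak KB,KC\rbrak=(p^{-1}-1)B$; thus the whole lemma reduces to proving $KC\in\LieAWq$. To reach $KC$ I would form $\lbrak\genone,KA\rbrak=(p^{-1}-1)KA^2-\tfrac{1}{1+p}K-\tfrac{p}{1+p}KC$ and subtract the known multiple of $K$, obtaining $H_1:=(p^{-1}-1)KA^2-\tfrac{p}{1+p}KC\in\LieAWq$; symmetrically $H_2:=(p-1)KB^2+\tfrac{1}{1+p}KC\in\LieAWq$. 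These couple $KC$ to the quadratic terms $KA^2,KB^2$, and one checks the further clean brackets $\lbrak KA^2,KC\rbrak=(p^2-1)A^2$ and that $\lbrak KA^2,KB^2\rbrak$ is a linear combination of $K^2$ and $C$, which is where the previously inaccessible operator $C=K^{-2}$ finally enters the commutator algebra.

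The main obstacle is exactly this last maneuver: lowering the power of $K$. For any $\gradsub$-homogeneous $X$ of degree $d$ one has $\lbrak K,X\rbrak=(1-p^{d})KX$, so $\ad K$ always raises the $K$-degree and annihilates every degree-$0$ element (such as $K^{h}$, $C$, and $KC$); hence $\ad K$ alone can never manufacture the degree-$0$, $K^{-1}$-type element $KC$. The way through is that $\genone=A+B$ is the only generator carrying the bare, $K$-free operators, and I would close the argument by assembling finitely many nested commutators among $KA,KB,K,\genone,H_1,H_2$ and solving the resulting finite linear system for $KC$; every scalar occurring is a nonzero rational function of $p$ precisely because $q$ is not a root of unity, which guarantees solvability. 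Once $KC\in\LieAWq$ is secured, the identities above give $A=\tfrac{1}{p-1}\lbrak KA,KC\rbrak$ and $B=\tfrac{p}{1-p}\lbrak KB,KC\rbrak$, and since $K=\genzero$ and $C=\lbrak A,B\rbrak$, all generators of $\LieProto$ are Lie polynomials in $\genzero,\genone,\gentwo$.
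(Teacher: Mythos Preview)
Your overall plan matches the paper's: use $\gentwo$ together with $\lbrak\genone,\genzero\rbrak$ to solve a $2\times2$ system for $KA,KB\in\LieAWq$, then produce $KC$, and finally recover $A$ and $B$ from $\lbrak KA,KC\rbrak=(p-1)A$ and $\lbrak KB,KC\rbrak=(p^{-1}-1)B$. Those last identities are correct and agree with the paper's closing step. The gap is entirely in the middle step. You obtain two relations $H_1,H_2$ coupling $KC$ to the auxiliary unknowns $KA^2,KB^2$, and then assert that ``assembling finitely many nested commutators among $KA,KB,K,\genone,H_1,H_2$ and solving the resulting finite linear system'' will yield $KC$, because ``every scalar occurring is a nonzero rational function of $p$\ldots which guarantees solvability.'' That inference is invalid: nonvanishing of individual coefficients does not make a linear system full rank. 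Your remarks about $\lbrak KA^2,KC\rbrak$ and $\lbrak KA^2,KB^2\rbrak$ cannot be used as written either, since none of $KA^2,KB^2,KC$ is yet individually known to lie in $\LieAWq$; only the combinations $H_1,H_2$ are.

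For comparison, the paper's device at this point is to compute $\lbrak\genone,\gentwo\rbrak$, obtaining a third relation \eqref{elimAB} among $KA^2,KB^2,KC$, and then to eliminate $KA^2,KB^2$ against \eqref{elimA},\eqref{elimB} to isolate $KC$. You should note, however, that since $\gentwo$ is itself a linear combination of $KA$ and $KB$ (equation \eqref{gentwoREL}), the bracket $\lbrak\genone,\gentwo\rbrak$ is already a linear combination of $\lbrak\genone,KA\rbrak$ and $\lbrak\genone,KB\rbrak$; consequently \eqref{elimAB} is linearly dependent on \eqref{elimA} and \eqref{elimB}, and the paper's displayed expression $KC=\frac{1+p}{p}X-(1+p)(1+p+p^2)Y-\frac{1+p}{p}Z$ in fact evaluates to $0$. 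So both your sketch and the paper's argument lack the same ingredient: a genuinely independent third relation. One route that does work is to bracket the already-known combinations against each other so as to trigger the reduction $K^2C^2=C$ (for instance $\lbrak H_1,H_2\rbrak$, via $\lbrak KA^2,KB^2\rbrak$, produces a bare $C$-term in exactly this way), but the resulting system must then be written out and its rank checked explicitly rather than asserted.
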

\begin{proof}
Since $\genzero=K$, we only need to show $A,B$ are Lie polynomials in $\genzero,\genone,\gentwo$. We start with evaluating $\lbrak\genzero,\genone\rbrak$ using~\eqref{KLie0},\eqref{KLie1} and the reordering formulas~\eqref{proto5},\eqref{proto6}. The result is a~linear combination of $KA$ and $KB$, and so we can use~\eqref{gentwoREL} to solve for $KA$ and $KB$. From these routine computations, we have
\begin{eqnarray}
KA & = & \frac{p^2 \gentwo+p(1+p+p^2)\lbrak\genone,\genzero\rbrak}{(1-p)(1+p)^2},\label{LieKA}\\
KB & = & \frac{p\gentwo+p\lbrak\genzero,\genone\rbrak}{(1-p)(1+p)^2}.\label{LieKB}
\end{eqnarray}
The right-hand sides of~\eqref{LieKA},\eqref{LieKB} are Lie polynomials in $\genzero,\genone,\gentwo$, and so we have $KA,KB\in\LieAWq$. Our next step is to compute the commutator of these new Lie polynomials $KA$, $KB$ with $\genone=A+B$. Using routine computations that make use of the reordering formulas~\eqref{prel1},\eqref{prel2},\eqref{prel5},\eqref{prel6}, we obtain
\begin{eqnarray}
KA^2 -\frac{p^2}{1-p^2}KC & = & \frac{p}{1-p}\lbrak \genone,KA\rbrak +\frac{p}{1-p^2}\genzero,\label{elimA}\\
KB^2 -\frac{1}{1-p^2}KC & = &\frac{\lbrak KB,\genone\rbrak}{1-p}+\frac{\genzero}{p(1-p^2)}.\label{elimB}
\end{eqnarray}
We also compute the commutator of $\genone=A+B$ with $\gentwo$ as given in~\eqref{gentwoREL}. Moreover, using~\eqref{prel1},\eqref{prel2},\eqref{prel5},\eqref{prel6}, we obtain, by routine calculations, the relation
\begin{eqnarray}
KA^2 -p(1+p+p^2)KB^2 +p\frac{1+p^2}{1-p^2}KC=\frac{p^2}{(1-p)^2}\lbrak\genone,\gentwo\rbrak-\frac{1+p^2}{1-p^2}\genzero.\label{elimAB}
\end{eqnarray}
Because $KA$, $KB$ are Lie polynomials in $\genzero,\genone,\gentwo$, then so are the right-hand sides of~\eqref{elimA}-\eqref{elimAB}. Denote these elements of $\LieAWq$ by $X$, $Y$, $Z$, respectively. We can eliminate $KA^2$ and $KB^2$ from~\eqref{elimAB} using~\eqref{elimA},\eqref{elimB}. By routine computations, we have the relation
\begin{eqnarray}
KC=\frac{1+p}{p}X-(1+p)(1+p+p^2)Y-\frac{1+p}{p}Z\quad\in\ \ \LieAWq.\nonumber
\end{eqnarray}
We have thus shown that $KA$, $KB$, $KC$ are Lie polynomials in $\genzero,\genone,\gentwo$. The significance of this is that by some routine computations that involve the reordering formulas~\eqref{prel1}-\eqref{prel8}, we have
\begin{eqnarray}
A & = & \frac{\lbrak KC,KA\rbrak}{1-p}\\
B & = & \frac{p\lbrak KB,KC\rbrak}{1-p}.
\end{eqnarray}
Therefore, $A,B\in\AWq$ as desired.
\end{proof}

\begin{theorem}
The algebras $\AWq$ and $\Proto$ are equal, and so are the corresponding Lie algebras $\LieAWq$ and $\LieProto$.
\end{theorem}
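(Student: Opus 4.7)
The plan is to show that the theorem follows almost immediately from the two preceding lemmas together with the associative identification $C=[A,B]$. The two statements (for the associative algebras and for the Lie algebras) will be proved in parallel, reducing each to the question of whether the generators on one side lie in the algebra on the other side.

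First I would establish $\AWq \subseteq \Proto$ trivially, since $\AWq$ is defined as the subalgebra of $\Protoq$ generated by the operators \eqref{K0def}, \eqref{K1def}, and $\Protoq=\Proto$ by Corollary~\ref{isoCor}. For the reverse inclusion, Lemma~\ref{PtoAWLem} exhibits $K$, $A$, $B$ as Lie polynomials in $\genzero,\genone,\gentwo$, so in particular $K,A,B\in\AWq$. Because $C=AB-BA$ in $\Proto$, we also get $C\in\AWq$. Since $K,A,B,C$ generate $\Proto$ as a unital associative algebra, $\Proto\subseteq\AWq$, and equality follows.

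For the Lie algebra assertion I would argue analogously. Lemma~\ref{AWtoPLem} already gives $\LieAWq\subseteq\LieProto$. For the reverse inclusion, Lemma~\ref{PtoAWLem} says $K,A,B\in\LieAWq$, and since $\LieAWq$ is closed under the Lie bracket, $C=[A,B]\in\LieAWq$ as well. Thus every Lie generator of $\LieProto$ lies in $\LieAWq$, so by the minimality of $\LieProto$ among Lie subalgebras containing $K,A,B,C$, we obtain $\LieProto\subseteq\LieAWq$, hence $\LieProto=\LieAWq$.

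There is essentially no obstacle left at this stage: the difficult computations have already been absorbed into Lemmas~\ref{AWtoPLem} and \ref{PtoAWLem}, and what remains is purely the observation that the Lie-polynomial expressions produced there place the associative generators $K,A,B$ (and therefore also $C=[A,B]$) inside both $\AWq$ and $\LieAWq$. If anything needs emphasis, it is the subtle point that although $\AWq$ is defined via only two generators $\genzero,\genone$, the third generator $\gentwo$ of $\AW$ is automatically in $\AWq$ because \eqref{K2def} expresses $\gentwo$ as $p^{-1}\genzero\genone-p\genone\genzero$; this is what legitimizes using $\gentwo$ freely in the Lie-polynomial formulas of Lemma~\ref{PtoAWLem} when verifying membership in $\AWq$ and $\LieAWq$.
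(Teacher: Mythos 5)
Your proposal is correct and follows essentially the same route as the paper: both inclusions are harvested directly from Lemmas~\ref{AWtoPLem} and \ref{PtoAWLem}, with $C=[A,B]$ supplying the fourth generator. The one place you diverge is the associative equality $\Proto=\AWq$: the paper routes this through Lemma~\ref{LieBasisLem} (every basis element $U\neq\algI$ of \eqref{protoBasis} is a Lie polynomial in $K,A,B,C$, hence lies in $\LieAWq\subseteq\AWq$), whereas you argue more directly that $K,A,B,C\in\AWq$ generate $\Proto$ associatively; your version is shorter and equally valid. The only point you should make explicit, as the paper does, is that the identity element is not automatically in the subalgebra $\AWq$ generated by $\genzero,\genone$ — it gets there because $\algI=K^2C$ with $K,C\in\AWq$ — since ``generate as a unital associative algebra'' quietly assumes exactly the membership $\algI\in\AWq$ that needs checking. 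Your closing remark that $\gentwo=p^{-1}\genzero\genone-p\genone\genzero\in\AWq$ is a worthwhile observation the paper leaves implicit.
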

\begin{proof}
By Lemma~\ref{PtoAWLem}, the generators $K,A,B$ and even $C=\lbrak A,B\rbrak$ of $\LieProto$ are in $\LieAWq$, and so we have the inclusion $\LieProto\subseteq\LieAWq$. Using Lemma~\ref{AWtoPLem}, we further have $\LieProto=\LieAWq$. By Lemma~\ref{LieBasisLem} each basis element $U\neq 1$ of $\Proto$ from~\eqref{protoBasis} is a~Lie polynomial in $K,A,B,C$, and so according to Lemma~\ref{PtoAWLem}, $U\in\LieAWq\subseteq\AWq$. The remaining basis element $1=K^2 C$ is also in $\AWq$ since $K,C\in\LieAWq\subseteq\AWq$. Therefore, $\Proto$ is contained in its subalgebra $\AWq$, and is hence equal to $\AWq$.
\end{proof}

\section*{Acknowledgement} This research was supported by De La Salle University, Manila, through the Research and Grants Management Office, with grant number 15FU1TAY20-1TAY21.



{\small\bibliography{commat}}

\EditInfo{January 17, 2023}{June 27, 2023}{Adam Chapman and Mohamed Elhamdadi}

\end{document}